\theoremstyle{definition}
\newtheorem{nul}{}[section]
\newtheorem{dfn}[nul]{Definition}
\newtheorem{cnstr}[nul]{Construction}
\newtheorem{ntn}[nul]{Notation}
\newtheorem{exm}[nul]{Example}
\newtheorem*{dfn*}{Definition}
\newtheorem*{axm*}{Axiom}
\newtheorem*{ntn*}{Notation}
\newtheorem*{exm*}{Example}
\newtheorem*{exr*}{Exercise}
\newtheorem*{int*}{Intuition}
\newtheorem*{qst*}{Question}
\theoremstyle{plain}
\newtheorem{thm}[nul]{Theorem}
\newtheorem{prp}[nul]{Proposition}
\newtheorem{lem}[nul]{Lemma}
\newtheorem{cor}[nul]{Corollary}
\newtheorem*{thm*}{Theorem}
\newtheorem*{mainthm*}{Universal Additivity Theorem}
\newtheorem*{structurethm*}{Structure Theorem}
\newtheorem*{prp*}{Proposition}
\newtheorem*{cor*}{Corollary}
\newtheorem*{lem*}{Lemma}
\newtheorem*{cnj*}{Conjecture}
\numberwithin{equation}{nul}
\DeclareMathOperator{\colim}{colim}
\DeclareMathOperator{\Fun}{Fun}
\DeclareMathOperator{\id}{id}
\DeclareMathOperator{\Mor}{Mor}
\newcommand{\XX}{\mathbf{X}}
\newcommand{\YY}{\mathbf{Y}}
\DeclareMathOperator{\Map}{Map}
\newcommand{\Cat}{\mathbf{Cat}}
\newcommand{\Top}{\mathbf{Top}}
\newcommand{\cart}{\mathrm{cart}}
\newcommand{\cocart}{\mathrm{cocart}}
\newcommand{\eff}{\mathit{eff}}
\newcommand{\op}{\mathit{op}}
\newcommand{\coloneq}{\mathrel{\mathop:}=}
\def\revddots{\mathinner{\mkern1mu\raise\p@ 
\vbox{\kern7\p@\hbox{.}}\mkern2mu 
\raise4\p@\hbox{.}\mkern2mu\raise7\p@\hbox{.}\mkern1mu}} 
\newcommand{\xdots}{\ddots\!\!\!\!\!\!\!\!\revddots}
\newcommand{\D}{\Delta}
\newcommand{\del}{\partial}
\newcommand{\mc}{\mathcal}
\newcommand{\sk}{\text{sk}}
\newcommand{\tsk}{\widetilde{\sk}}
\newcommand{\fromto}[2]{{#1}\ \tikz[baseline]\draw[>=stealth,->](0,0.5ex)--(0.5,0.5ex);\ {#2}}
\newcommand{\into}[2]{{#1}\ \tikz[baseline]\draw[>=stealth,right hook->](0,0.5ex)--(0.5,0.5ex);\ {#2}}
\newcommand{\cofto}[2]{{#1}\ \tikz[baseline]\draw[>=stealth,>->](0,0.5ex)--(0.5,0.5ex);\ {#2}}
\newcommand{\fibto}[2]{{#1}\ \tikz[baseline]\draw[>=stealth,->>](0,0.5ex)--(0.5,0.5ex);\ {#2}}
\newcommand{\equivto}[2]{{#1}\ \tikz[baseline]\draw[>=stealth,->,font=\scriptsize,inner sep=0.5pt](0,0.5ex)--node[above]{$\sim$}(0.5,0.5ex);\ {#2}}
\newcommand{\goesto}[2]{{#1}\ \tikz[baseline]\draw[|->](0,0.5ex)--(0.5,0.5ex);\ {#2}}
\renewcommand{\to}{\ \tikz[baseline]\draw[>=stealth,->](0,0.5ex)--(0.5,0.5ex);\ }
\newcommand{\ot}{\ \tikz[baseline]\draw[>=stealth,<-](0,0.5ex)--(0.5,0.5ex);\ }
\title{Dualizing cartesian and cocartesian fibrations}
\author{Clark Barwick}
\author{Saul Glasman}
\author{Denis Nardin}
\begin{document}
\maketitle
\tableofcontents

Anyone who has worked seriously with quasicategories has had to spend some quality time with cartesian and cocartesian fibrations. (For a crash course in the basic definitions and constructions, see Appendix \ref{sect:recall}; for an in-depth study, see \cite[\S 2.4.2]{HTT}.) The purpose of (co)cartesian fibrations is to finesse the various homotopy coherence issues that naturally arise when one wishes to speak of functors valued in the quasicategory \(\Cat_{\infty}\) of quasicategories. A cartesian fibration \(p:\fromto{X}{S}\) is ``essentially the same thing'' as a functor \(\XX:\fromto{S^{\op}}{\Cat_{\infty}}\), and a cocartesian fibration \(q:\fromto{Y}{T}\) is ``essentially the same thing'' as a functor \(\YY:\fromto{T}{\Cat_{\infty}}\). We say that the (co)cartesian fibration $p$ or $q$ is \textbf{\emph{classified by}} $\XX$ or $\YY$ (\ref{rec:straighten}).

It has therefore been a continual source of irritation to many of us who work with quasicategories that, given a cartesian fibration $p:\fromto{X}{S}$, there has been no \emph{explicit} way to construct a cocartesian fibration $p^{\vee}:\fromto{X^{\vee}}{S^{\op}}$ that is classified by the same functor $\fromto{S^{\op}}{\Cat_{\infty}}$. Many constructions require as input exactly one of these two, and if one has become sidled with the wrong one, then in the absence of an explicit construction, one is forced to extrude the desired fibration through tortuous expressions such as ``the cocartesian fibration $p^{\vee}$ classified by the functor by which the cartesian fibration $p$ is classified.'' We know of course that such a thing exists, but we have little hope of \emph{using} it if we don't have access to a model that lets us precisely identify an $n$-simplex of $X^{\vee}$ in terms of $p$.

In this technical note, we put an end to this maddening state of affairs: we proffer a very explicit construction of the \emph{dual cocartesian fibration} $p^{\vee}$ of a cartesian fibration $p$, and we show they are classified by the same functor to $\Cat_{\infty}$. Amusingly, the construction of the dual itself is quite simple; however, proving that it works as advertised (and for that matter, even proving that $p^{\vee}$ is a cocartesian fibration) is a nontrivial matter. The main technical tool we use is the technology of \emph{effective Burnside $\infty$-categories} and the \emph{unfurling construction} of the first author \cite{mack1}.

In the first section, we will give an informal but very concrete description of the dual, and we will state the main theorem, Th. \ref{thm:main}. Some users of this technology will be happy to stop reading right there. For those who press on, in \S \ref{sect:twarr}, we briefly recall the definition of the twisted arrow category, which plays a significant role in the construction. In \S \ref{sect:dualdfn}, we give a precise definition of the dual of a cartesian fibration, and we prove that it is a cocartesian fibration. In particular, we can say \emph{exactly} what the $n$-simplices of $X^{\vee}$ are (\ref{nul:simplicesofdual}). In \S \ref{sect:veevee}, we prove Pr. \ref{prp:doubledual}, which asserts that the double dual is homotopic to the identity, and we use this to prove the main theorem, Th. \ref{thm:main}. Finally, in \S \ref{reltwarr}, we construct a relative version of the twisted arrow $\infty$-category for a cocartesian fibration and its dual.  provides another way to witness the equivalence between the functor classifying $p$ and the functor classifying $p^{\vee}$.


\section{Overview}

\begin{nul} Before we describe the construction, let us pause to note that simply taking opposites will \emph{not} address the issue of the day: if $p:\fromto{X}{S}$ is a cartesian fibration, then it is true that $p^{\op}:\fromto{X^{\op}}{S^{\op}}$ is a cocartesian fibration, but the functor $\fromto{S^{\op}}{\Cat_{\infty}}$ that classifies $p^{\op}$ is the composite of the functor $\XX:\fromto{S^{\op}}{\Cat_{\infty}}$ that classifies $p$ with the involution
\begin{equation*}
\op:\fromto{\Cat_{\infty}}{\Cat_{\infty}}
\end{equation*}
that carries a quasicategory to its opposite.

This discussion does, however, permit us to rephrase the problem in an enlightening way: the morphism $(p^{\vee})^{\op}:\fromto{(X^{\vee})^{\op}}{S}$ must be another cartesian fibration that is classified by the composite of the functor that classifies $p$ with the involution $\op$. The dual cocartesian fibration to $(p^{\vee})^{\op}$ should be equivalent to $p^{\op}$, so that we have a duality formula
\begin{equation*}
((p^{\vee})^{\op})^{\vee}\simeq p^{\op}.
\end{equation*}
In particular, it will be sensible to define the dual $q^{\vee}$ of a \emph{co}cartesian fibration $q:\fromto{Y}{T}$ as $((q^{\op})^{\vee})^{\op}$, so that $p^{\vee\vee}\simeq p$. We thus summarize:
\begin{center}
\smallskip
\begin{tabular}{ccc}
\toprule
The cartesian fibration & and the cocartesian fibration & are each classified by\\
\midrule
$p:\fromto{X}{S}$ & $p^{\vee}:\fromto{X^{\vee}}{S^{\op}}$ & $\XX:\fromto{S^{\op}}{\Cat_{\infty}}$;\\
$(p^{\vee})^{\op}:\fromto{(X^{\vee})^{\op}}{S}$ & $p^{\op}:\fromto{X^{\op}}{S^{\op}}$ & $\op\circ\XX:\fromto{S^{\op}}{\Cat_{\infty}}$;\\
$q^{\vee}:\fromto{Y^{\vee}}{T^{\op}}$ & $q:\fromto{Y}{T}$ & $\YY:\fromto{T}{\Cat_{\infty}}$;\\
$q^{\op}:\fromto{Y^{\op}}{T^{\op}}$ & $(q^{\vee})^{\op}:\fromto{(Y^{\vee})^{\op}}{T}$ & $\op\circ\YY:\fromto{T}{\Cat_{\infty}}$.\\
\bottomrule
\end{tabular}
\smallskip
\end{center}
\end{nul}

\begin{nul} We can describe our construction very efficiently if we give ourselves the luxury of temporarily skipping some details. For any quasicategory $S$ and any cartesian fibration $p:\fromto{X}{S}$, we will define $X^{\vee}$ as a quasicategory whose objects are those of $X$ and whose morphisms $\fromto{x}{y}$ are diagrams
\begin{equation}\label{eq:mapfromxtoyinXdual}
\begin{tikzpicture}[baseline]
\matrix(m)[matrix of math nodes, 
row sep=2ex, column sep=3ex, 
text height=1.5ex, text depth=0.25ex] 
{&u&\\ 
x&&y\\}; 
\path[>=stealth,->,inner sep=0.9pt,font=\scriptsize] 
(m-1-2) edge node[above left]{$f$} (m-2-1) 
edge node[above right]{$g$} (m-2-3); 
\end{tikzpicture}
\end{equation}
of $X$ in which $f$ is a $p$-cartesian edge, and $p(g)$ is a degenerate edge of $S$. Composition of morphisms in $X^{\vee}$ will be given by forming a pullback:
\begin{equation*}
\begin{tikzpicture} 
\matrix(m)[matrix of math nodes, 
row sep=2ex, column sep=2ex, 
text height=1.5ex, text depth=0.25ex] 
{&&w&&\\
&u&&v&\\ 
x&&y&&z\\}; 
\path[>=stealth,->,font=\scriptsize] 
(m-1-3) edge (m-2-2) 
edge (m-2-4)
(m-2-2) edge (m-3-1) 
edge (m-3-3)
(m-2-4) edge (m-3-3) 
edge (m-3-5); 
\end{tikzpicture}
\end{equation*}

The $n$-simplices for $n\geq3$ are described completely in \ref{nul:simplicesofdual}. One now has to explain why this defines a quasicategory, but it does indeed (Df. \ref{dfn:dualdefn}), and it admits a natural functor to $S^{\op}$ that carries an object $x$ to $p(x)$ and a morphism as in \eqref{eq:mapfromxtoyinXdual} to the edge $p(f):\fromto{p(x)}{p(u)=p(y)}$ in $S^{\op}$. This is our functor $p^{\vee}:\fromto{X^{\vee}}{S^{\op}}$, and we have good news.
\end{nul}

\begin{prp} If $p:\fromto{X}{S}$ is a cartesian fibration, then $p^{\vee}:\fromto{X^{\vee}}{S^{\op}}$ is a cocartesian fibration, and a morphism as in \eqref{eq:mapfromxtoyinXdual} is $p^{\vee}$-cocartesian just in case $g$ is an equivalence.
\end{prp}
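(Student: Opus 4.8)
The plan is to use the author's own weapon: realize $X^{\vee}$ as the unfurling of a suitable functor out of an effective Burnside $\infty$-category, so that both the identification of $X^{\vee}$ as a quasicategory and the cocartesian-fibration claim become instances of the general unfurling machinery of \cite{mack1}. First I would recall that a cartesian fibration $p\colon X\to S$ is a functor $\XX\colon S^{\op}\to\Cat_\infty$; the three classes of edges of $S$ relevant to the Burnside picture are, on the one hand, all edges (which act covariantly by restriction along $\XX$, i.e. give us the ``backward'' legs $g$ with $p(g)$ degenerate — these are the identities on the base) and, on the other hand, the degenerate ones. More to the point, I would observe that the definition of $X^{\vee}$ in \eqref{eq:mapfromxtoyinXdual} is exactly the recipe for morphisms in an unfurled category: a morphism $x\to y$ is a span whose backward leg $f$ is $p$-cartesian (so it lies over an edge of $S$ and is the restriction map of $\XX$) and whose forward leg $g$ lives in a single fiber $X_s$ (since $p(g)$ is degenerate). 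Composition by pullback in \eqref{eq:mapfromxtoyinXdual} is precisely composition of spans, and the pullback exists and has the right form because $p$-cartesian edges are stable under pullback along arbitrary edges in a cartesian fibration (this is where one uses that $p$ is cartesian, not merely an inner fibration).

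Concretely, the key steps are: (1) Define the pair $(X, X^{\cart})$ inside $S$-schemes appropriately and form the effective Burnside $\infty$-category $A^{\eff}(X; X^{\cart}, \text{all})$ whose objects are those of $X$ and whose morphisms are the spans of \eqref{eq:mapfromxtoyinXdual}; check the triad axioms, the crucial one being pullback-stability of $p$-cartesian edges, which is standard (HTT 2.4.1.7 or the recollections in Appendix \ref{sect:recall}). (2) Identify $X^{\vee}$ with this Burnside category — this pins down the $n$-simplices and in particular recovers \ref{nul:simplicesofdual}, and shows $X^{\vee}$ is a quasicategory. (3) Exhibit the functor $p^{\vee}\colon X^{\vee}\to S^{\op}$ as (induced by) $p$ on objects and by $p(f)$ on the span \eqref{eq:mapfromxtoyinXdual}; note that over $S^{\op}$ the backward legs $f$ are sent to genuine edges while the forward legs $g$ are sent to degeneracies, so $p^{\vee}$ is exactly the unfurling projection. (4) Invoke the unfurling theorem to conclude $p^{\vee}$ is a cocartesian fibration, with cocartesian edges being those spans in which the forward leg is an equivalence — equivalently, the spans represented by a single $p$-cartesian edge $f$. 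The ``if'' direction of the cocartesian characterization is immediate from the general theory; the ``only if'' follows because a general morphism \eqref{eq:mapfromxtoyinXdual} factors as a cocartesian edge (the $f$-part) followed by the fiberwise morphism $g$, and $g$ is an equivalence exactly when this factorization is trivial, i.e. when the original morphism was already cocartesian.

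The main obstacle I anticipate is step (1)–(2): verifying that the naive simplicial set described in \eqref{eq:mapfromxtoyinXdual} genuinely matches the Burnside-category construction at the level of all higher simplices, and that the relevant adequate triple / triad hypotheses of \cite{mack1} are satisfied. The subtle point is that the two ``leg classes'' here are not a subcategory-pair of some ambient category in the naive sense — the $p$-cartesian edges are not closed under composition with arbitrary edges, only the pullback-stability holds — so one must check one is in the setting where the unfurling construction applies (the ``triad'' rather than ``adequate triple'' version, where one class is all edges and the other is a pullback-stable class of edges in the total space of a cartesian fibration). Once that framework check is in place, everything else — including the precise description of $n$-simplices and the characterization of cocartesian edges — is a direct citation of the unfurling theorem, so the real work is the bookkeeping that sets up the hypotheses correctly. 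An alternative, more hands-on route would bypass \cite{mack1} entirely and verify the cocartesian lifting criterion directly: given $x\in X^{\vee}$ over $s\in S^{\op}$ and an edge $\bar e\colon s\to t$ in $S^{\op}$ (i.e. $e\colon t\to s$ in $S$), choose a $p$-cartesian lift $f\colon u\to x$ of $e$ and take the span $x\leftarrow u \xrightarrow{=} u$; one then checks the mapping-space lifting condition of HTT 2.4.1.1 by a pullback computation, using again pullback-stability of cartesian edges to build the required factorizations. I would present the Burnside/unfurling argument as the main proof and mention the direct argument as a remark.
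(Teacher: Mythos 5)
Your strategy matches the paper's: both proofs reduce the proposition to the fundamental unfurling lemmas \cite[Lm.~11.4, 11.5]{mack1} by recognizing $X^{\vee}$ as (a pullback of) an effective Burnside $\infty$-category. Two details in your setup need correction, however. First, the triple you write down, ``$A^{\eff}(X; X^{\cart}, \text{all})$'', is not the one that produces $X^{\vee}$; the paper uses $A^{\eff}(X,\, X\times_S\iota S,\, \iota^S X)$, where the ingress class $C_\dag = X\times_S\iota S$ consists of morphisms lying over \emph{equivalences} of $S$ (these are the $g$-legs) and the egress class $C^\dag = \iota^S X$ consists of the $p$-cartesian edges (the $f$-legs). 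Neither slot is ``all edges''; and the restriction from ``covers an equivalence'' to ``covers a degenerate edge'' in your diagram \eqref{eq:mapfromxtoyinXdual} happens only after pulling back $\overline{p}:A^{\eff}(X,\ldots)\to A^{\eff}(S,\iota S,S)$ along the (essentially surjective) inclusion $S^{\op}\hookrightarrow A^{\eff}(S,\iota S,S)$. Second, the obstacle you anticipate about ``triads'' vs.\ ``adequate triples'' is a red herring: both $X\times_S\iota S$ and $\iota^S X$ are genuine subcategories of $X$ ($p$-cartesian edges \emph{are} closed under composition and contain the equivalences), so $(X, X\times_S\iota S, \iota^S X)$ is an honest adequate triple, and $p$ is an adequate inner fibration over the adequate triple $(S,\iota S,S)$ in the sense of \cite[Df.~10.3]{mack1}. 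Once these hypotheses are stated correctly the lemmas apply verbatim, which is exactly what the paper does. Your hands-on alternative (choose a $p$-cartesian lift $f$ and take the span $x\leftarrow u \xrightarrow{=} u$) is the correct recipe for the cocartesian lift and is a reasonable remark to include.
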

\noindent This much will actually follow trivially from the fundamental unfurling lemmas of the first author \cite[Lm. 11.4 and Lm. 11.5]{mack1}, but the duality statement we're after is more than just the construction of this cocartesian fibration.

If one inspects the fiber of $p^{\vee}$ over a vertex $s\in S^{\op}$, one finds that it is the quasicategory whose objects are objects of $X_s\coloneq p^{-1}(s)$, and whose morphisms $\fromto{x}{y}$ are diagrams \eqref{eq:mapfromxtoyinXdual} of $X_s$ in which $f$ is an equivalence. This is visibly equivalent to $X_s$ itself. Furthemore, we will prove that this equivalence is functorial:
\begin{prp} The functor $\fromto{S^{\op}}{\Cat_{\infty}}$ that classifies a cartesian fibration $p$ is equivalent to the functor $\fromto{S^{\op}}{\Cat_{\infty}}$ that classifies its dual $p^{\vee}$.
\end{prp}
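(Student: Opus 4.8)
The plan is to exhibit a zig-zag of equivalences of cartesian fibrations (or, dually, cocartesian fibrations) relating $p$ and $p^{\vee}$ through a common intermediary built out of the twisted arrow construction, and then to apply the unfurling technology of \cite{mack1} to identify the classifying functors. Concretely, the rough shape of $X^{\vee}$ — objects of $X$, morphisms given by spans \eqref{eq:mapfromxtoyinXdual} in which one leg is $p$-cartesian and the other lies over a degenerate edge — is exactly the kind of thing produced by the unfurling of an adequate triple built on $X$. So the first step is to set up the relevant \emph{adequate} (or disjunctive) triple: take the $\infty$-category $X$, with ingressive morphisms the $p$-cartesian edges and egressive morphisms those edges lying over equivalences of $S$ (equivalently, the fiberwise equivalences), and verify the pullback-stability and composability axioms needed to form the effective Burnside $\infty$-category $A^{\eff}$ of this triple. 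This is where the hypothesis that $p$ is a cartesian fibration does real work: pullbacks of $p$-cartesian edges along fiberwise morphisms exist and are again $p$-cartesian, which is precisely the content of the informal ``composition via pullback'' description above.

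Next I would recall that the unfurling construction, applied to this triple over $S$, simultaneously produces a cocartesian fibration whose total space is (by construction, cf. \ref{nul:simplicesofdual}) the $\infty$-category $X^{\vee}$, together with a map to $S^{\op}$; this already reproves the preceding proposition that $p^{\vee}$ is cocartesian. The key point is that the unfurling lemmas \cite[Lm.~11.4, Lm.~11.5]{mack1} do more than assert cocartesianness: they compute the fibers \emph{and the transition functors} of the resulting cocartesian fibration in terms of the input data. Specifically, over $s \in S^{\op}$ the fiber is the effective Burnside category of the triple restricted to $X_s$, whose ingressive maps are now equivalences; such a Burnside category collapses to $X_s$ itself (a span with one leg invertible is just a morphism). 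And for an edge $s \to s'$ in $S^{\op}$, i.e.\ a morphism $s' \to s$ in $S$, the transition functor $X_s \to X_{s'}$ is induced by pulling back along the chosen cartesian lift — which is exactly the transition functor $f^{*}$ of the cartesian fibration $p$ in the straightening $\XX : S^{\op} \to \Cat_\infty$. Thus the functor classifying $p^{\vee}$ agrees pointwise and on morphisms with $\XX$.

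To upgrade this pointwise-plus-edgewise match to a genuine equivalence of functors $S^{\op} \to \Cat_\infty$, I would construct an explicit natural comparison rather than merely compare values: there is an evident functor $X \to X^{\vee}$ over $S^{\op}$... but wait — $p$ lives over $S$ and $p^{\vee}$ over $S^{\op}$, so one cannot compare them directly. The right move is to compare the two associated functors out of $S^{\op}$: pass to the \emph{cocartesian} fibration classified by $\XX$, call it $p^{\textit{cocart}} : X' \to S^{\op}$ (the ``straightening then restraightening'' of $p$), and build a map of cocartesian fibrations $X^{\vee} \to X'$ over $S^{\op}$ — or its inverse — that on each fiber is the identification $X_s \simeq X_s$ above and is compatible with transition maps by the unfurling computation. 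Since a map of cocartesian fibrations which is a fiberwise equivalence is an equivalence, and since equivalences of cocartesian fibrations over $S^{\op}$ correspond to equivalences of the classifying functors, this finishes the proof. In fact it is cleaner to defer the construction of this natural comparison to the double-dual machinery: Pr.~\ref{prp:doubledual} gives $p^{\vee\vee} \simeq p$ as cartesian fibrations over $S$, and the relative twisted arrow construction of \S\ref{reltwarr} provides a direct equivalence over the base between the relevant fibrations; granting either of these, the equivalence of classifying functors is immediate.

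The main obstacle I anticipate is the \emph{naturality}, i.e.\ genuinely producing the map of (co)cartesian fibrations over $S^{\op}$ rather than just checking that fibers and transition functors match — a pointwise coincidence of functors is not yet an equivalence of functors without a coherent comparison, and assembling that comparison is where the twisted arrow category and the full strength of the unfurling lemmas (as opposed to a bare fiberwise computation) are essential. Everything else — the verification of the triple axioms, the collapse of the fiberwise Burnside categories, the identification of transition functors with $f^{*}$ — is routine given \cite{mack1}.
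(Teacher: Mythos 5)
Your diagnosis of the difficulty is exactly right---and this is worth emphasizing, because it is the whole content of the problem: verifying that fibers and transition functors agree pointwise does not produce an equivalence of functors $S^{\op}\to\Cat_{\infty}$, and neither the unfurling lemmas nor a bare reading of \ref{nul:simplicesofdual} hands you the required coherence data for free. Where your proposal goes astray is in the two shortcuts you offer for closing that gap. The second one---the relative twisted arrow pairing of \S\ref{reltwarr}---is legitimate and is in fact flagged in the introduction as ``another way to witness the equivalence''; but it is a distinct argument from the paper's primary proof, not a deferral of work you've already set up. The first shortcut, however, is a genuine error: knowing $p^{\vee\vee}\simeq p$ (Pr.~\ref{prp:doubledual}) does \emph{not} make the equivalence of classifying functors ``immediate.'' The double-dual statement shows that $(-)^{\vee}$ induces a self-equivalence of $\Fun(S^{\op},\Cat_{\infty})$ under straightening, but a priori that self-equivalence could be nontrivial (e.g.\ it could be $\op\circ-$, which squares to the identity). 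Something more is needed to pin it down.

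The paper's actual proof supplies that ``something more'' by a surprisingly indirect route that your proposal does not touch. One observes that the composite ``unstraighten, dualize, straighten'' is natural in $S$, hence gives an autoequivalence $\eta$ of the functor $\Map((-)^{\op},\Cat_{\infty}):\Cat_{\infty}^{\op}\to\widehat{\Top}$; Kan-extending and applying the Yoneda lemma, $\eta$ is induced by an autoequivalence of $\widehat{\Cat}_{\infty}$ itself; the Unicity Theorem of To\"en, Lurie, and Barwick--Schommer-Pries then says this autoequivalence is canonically either $\id$ or $\op$; and the case $S=\Delta^{0}$ rules out $\op$. This rigidity argument is precisely what substitutes for the explicit natural comparison you were (correctly) worried about having to construct, and it is the key idea missing from your proposal. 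Your instinct to reach for the relative twisted arrow category instead is a viable second route, but if you take it you should actually carry out the construction of \S\ref{reltwarr} and invoke Lurie's recognition principle for twisted arrow fibrations (Pr.~\ref{lem:twarrrecog}) rather than gesture at the unfurling lemmas, which by themselves establish cocartesianness and nothing about the classifying functor.
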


\noindent Equivalently, we have the following.

\begin{prp} If $\XX:\fromto{S^{\op}}{\Cat_{\infty}}$ classifies $p$, then $\op\circ\XX:\fromto{S^{\op}}{\Cat_{\infty}}$ classifies $(p^{\vee})^{\op}$.
\end{prp}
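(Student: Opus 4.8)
The plan is to derive this as a formal consequence of the Proposition asserting that $p$ and $p^{\vee}$ are classified by the same functor $\fromto{S^{\op}}{\Cat_{\infty}}$, together with the compatibility of unstraightening with the formation of fiberwise opposites. Thus the substantive content is entirely contained in that Proposition (equivalently, in Th.~\ref{thm:main}), and the present statement merely repackages it.

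The one ingredient to record is the following general principle: if $q:\fromto{Y}{T}$ is a cocartesian fibration classified by $F:\fromto{T}{\Cat_{\infty}}$, then $q^{\op}:\fromto{Y^{\op}}{T^{\op}}$ is a cartesian fibration, classified --- once we regard a cartesian fibration over $T^{\op}$ as classified by a functor out of $(T^{\op})^{\op}\simeq T$ --- by $\op\circ F:\fromto{T}{\Cat_{\infty}}$. This is obtained by unwinding the straightening construction: the fiber of $q^{\op}$ over $t$ is $F(t)^{\op}$, a morphism of $Y^{\op}$ is $q^{\op}$-cartesian exactly when the corresponding morphism of $Y$ is $q$-cocartesian, and hence each transport functor of $q^{\op}$ is the opposite of the corresponding transport functor of $q$; alternatively the principle may be extracted from the standard dictionary relating cartesian and cocartesian fibrations over opposite bases \cite{HTT}.

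Applying this with $T=S^{\op}$, $q=p^{\vee}$, and $F\simeq\XX$ (the preceding Proposition), we obtain that $(p^{\vee})^{\op}:\fromto{(X^{\vee})^{\op}}{S}$ is a cartesian fibration classified by $\op\circ\XX:\fromto{S^{\op}}{\Cat_{\infty}}$, which is the assertion. Once \S\ref{sect:veevee} is available there is also a second route: the dual cocartesian fibration of the cartesian fibration $(p^{\vee})^{\op}$ is, by the duality formula $((p^{\vee})^{\op})^{\vee}\simeq p^{\op}$, the fibration $p^{\op}:\fromto{X^{\op}}{S^{\op}}$, which is manifestly classified by $\op\circ\XX$; the preceding Proposition applied to $(p^{\vee})^{\op}$ then identifies the functor classifying $(p^{\vee})^{\op}$ with $\op\circ\XX$ directly.

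I do not expect any real obstacle here. The only point requiring care is the bookkeeping of variances in the opposite-compatibility principle, together with the harmless identification $(S^{\op})^{\op}\simeq S$; there is no homotopy-coherence difficulty beyond what is already handled in the preceding Proposition.
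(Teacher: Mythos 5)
Your proposal is correct and matches the paper's approach: the paper introduces this proposition with ``Equivalently, we have the following,'' signalling that it is just the reformulation of the preceding proposition via the standard compatibility of straightening with fiberwise opposites, and your argument simply makes that equivalence explicit. Both propositions are then subsumed in the proof of Theorem \ref{thm:main}, exactly as you note.
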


\begin{nul} We will define the dual of a \emph{co}cartesian fibration $q:\fromto{Y}{T}$ over a quasicategory $T$ as suggested above:
\begin{equation*}
q^{\vee}\coloneq((q^{\op})^{\vee})^{\op}.
\end{equation*}
In other words, $Y^{\vee}$ will be the quasicategory whose objects are those of $Y$ and whose morphisms $\fromto{x}{y}$ are diagrams
\begin{equation*}
\begin{tikzpicture} 
\matrix(m)[matrix of math nodes, 
row sep=2ex, column sep=3ex, 
text height=1.5ex, text depth=0.25ex] 
{&u&\\ 
x&&y\\}; 
\path[>=stealth,<-,inner sep=0.9pt,font=\scriptsize] 
(m-1-2) edge node[above left]{$f$} (m-2-1) 
edge node[above right]{$g$} (m-2-3); 
\end{tikzpicture}
\end{equation*}
of $Y$ in which $q(f)$ is a degenerate edge of $T$, and $g$ is $q$-cocartesian. Composition of morphisms in $Y^{\vee}$ will be given by forming a pushout:
\begin{equation*}
\begin{tikzpicture} 
\matrix(m)[matrix of math nodes, 
row sep=2ex, column sep=2ex, 
text height=1.5ex, text depth=0.25ex] 
{&&w&&\\
&u&&v&\\ 
x&&y&&z\\}; 
\path[>=stealth,<-,font=\scriptsize] 
(m-1-3) edge (m-2-2) 
edge (m-2-4)
(m-2-2) edge (m-3-1) 
edge (m-3-3)
(m-2-4) edge (m-3-3) 
edge (m-3-5); 
\end{tikzpicture}
\end{equation*}
The three propositions above will immediately dualize. 
\end{nul}

In summary, the objects of $X^{\vee}$ and $(X^{\vee})^{\op}=(X^{\op})^{\vee}$ are simply the objects of $X$, and the objects of $Y^{\vee}$ and $(Y^{\vee})^{\op}=(Y^{\op})^{\vee}$ are simply the objects of $Y$. A morphism $\eta:\fromto{x}{y}$ in each of these $\infty$-categories is then as follows:
\begin{center}
\smallskip
\begin{tabular}{ccccc}
\toprule
In & $\eta$ is a diagram & of & in which $f$ & and $g$\\
\midrule
$X^{\vee}$ & $\begin{tikzpicture}[baseline]
\matrix(m)[matrix of math nodes, 
row sep=2ex, column sep=3ex, 
text height=1.5ex, text depth=0.25ex] 
{&u&\\ 
x&&y\\}; 
\path[>=stealth,->,inner sep=0.9pt,font=\scriptsize] 
(m-1-2) edge node[above left]{$f$} (m-2-1) 
edge node[above right]{$g$} (m-2-3); 
\end{tikzpicture}$ & $X$ & is $p$-cartesian, & lies over an identity;\\
$(X^{\vee})^{\op}$ & $\begin{tikzpicture}[baseline]
\matrix(m)[matrix of math nodes, 
row sep=2ex, column sep=3ex, 
text height=1.5ex, text depth=0.25ex] 
{&u&\\ 
x&&y\\}; 
\path[>=stealth,->,inner sep=0.9pt,font=\scriptsize] 
(m-1-2) edge node[above left]{$f$} (m-2-1) 
edge node[above right]{$g$} (m-2-3); 
\end{tikzpicture}$ & $X$ & lies over an identity, & is $p$-cartesian;\\
$Y^{\vee}$ & $\begin{tikzpicture}[baseline]
\matrix(m)[matrix of math nodes, 
row sep=2ex, column sep=3ex, 
text height=1.5ex, text depth=0.25ex] 
{&u&\\ 
x&&y\\}; 
\path[>=stealth,<-,inner sep=0.9pt,font=\scriptsize] 
(m-1-2) edge node[above left]{$f$} (m-2-1) 
edge node[above right]{$g$} (m-2-3); 
\end{tikzpicture}$ & $Y$ & lies over an identity, & is $q$-cocartesian;\\
$(Y^{\vee})^{\op}$ & $\begin{tikzpicture}[baseline]
\matrix(m)[matrix of math nodes, 
row sep=2ex, column sep=3ex, 
text height=1.5ex, text depth=0.25ex] 
{&u&\\ 
x&&y\\}; 
\path[>=stealth,<-,inner sep=0.9pt,font=\scriptsize] 
(m-1-2) edge node[above left]{$f$} (m-2-1) 
edge node[above right]{$g$} (m-2-3); 
\end{tikzpicture}$ & $Y$ & is $q$-cocartesian, & lies over an identity.\\
\bottomrule
\end{tabular}
\smallskip
\end{center}

The propositions above are all subsumed in the following statement of our main theorem, which employs some of the notation of \ref{rec:straighten}.

\begin{thm}\label{thm:main} The assignments $\goesto{p}{p^{\vee}}$ and $\goesto{q}{q^{\vee}}$ define homotopy inverse equivalences of $\infty$-categories
\begin{equation*}
(-)^{\vee}:\Cat_{\infty/S}^{\cart}\ \tikz[baseline]\draw[>=stealth,<->,font=\scriptsize,inner sep=0.5pt](0,0.5ex)--node[above]{$\sim$}(0.75,0.5ex);\ \Cat_{\infty/S^{\op}}^{\cocart}:(-)^{\vee}
\end{equation*}
of cartesian fibrations over $S$ and cocartesian fibrations over $S^{\op}$. These equivalences are compatible with the straightening/unstraightening equivalences $s$ in the sense that the diagram of equivalences
\begin{equation*}
\begin{tikzpicture} 
\matrix(m)[matrix of math nodes, 
row sep=4ex, column sep=2ex, 
text height=1.5ex, text depth=0.25ex] 
{\Cat_{\infty/S}^{\cart} &&\Cat_{\infty/S^{\op}}^{\cocart}\\ 
&\Fun(S^{\op},\Cat_{\infty})&\\[2ex]
&\Fun(S^{\op},\Cat_{\infty})&\\
\Cat_{\infty/S^{\op}}^{\cocart} &&\Cat_{\infty/S}^{\cart}\\}; 
\path[>=stealth,<->,inner sep=1pt,font=\scriptsize] 
(m-1-1) edge node[above]{$(-)^{\vee}$} (m-1-3) 
edge node[below left]{$s$} (m-2-2)
edge node[left]{$\op$} (m-4-1)
(m-1-3) edge node[below right]{$s$} (m-2-2)
edge node[right]{$\op$} (m-4-3)
(m-2-2) edge node[right]{$\op\circ-$} (m-3-2)
(m-4-1) edge node[below]{$(-)^{\vee}$} (m-4-3)
edge node[above left]{$s$} (m-3-2)
(m-4-3) edge node[above right]{$s$} (m-3-2); 
\end{tikzpicture}
\end{equation*}
commutes up to a (canonical) homotopy.
\end{thm}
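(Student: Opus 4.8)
The plan is to reduce Theorem \ref{thm:main} to the three propositions of the Overview, which in turn are to be established via the unfurling machinery. The first move is to verify that $(-)^\vee$ genuinely defines a functor $\Cat_{\infty/S}^{\cart}\to\Cat_{\infty/S^\op}^{\cocart}$, not merely an assignment on objects. For this I would exhibit $X^\vee$ as a simplicial subset of a suitable functor category built from the twisted arrow $\infty$-category (\S\ref{sect:twarr}) applied fiberwise, so that a morphism $X\to X'$ of cartesian fibrations over $S$ — which preserves cartesian edges and lies over $S$ — visibly induces a morphism $X^\vee\to (X')^\vee$ over $S^\op$ sending cocartesian edges to cocartesian edges (by the characterization in the first Proposition: cocartesian $\Leftrightarrow$ $g$ an equivalence). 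The same construction applied to $S^\op$ gives the reverse functor $(-)^\vee:\Cat_{\infty/S^\op}^{\cocart}\to\Cat_{\infty/S}^{\cart}$ after passing through opposites, using the definition $q^\vee\coloneq((q^\op)^\vee)^\op$.

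The heart of the argument is that these two functors are mutually inverse. Here I would invoke Pr.~\ref{prp:doubledual} (the double dual is homotopic to the identity): applied to a cartesian fibration $p$ it gives a natural equivalence $p^{\vee\vee}\simeq p$ in $\Cat_{\infty/S}^{\cart}$, and applied to $p^\vee$ (a cocartesian fibration over $S^\op$) it gives $(p^\vee)^{\vee\vee}\simeq p^\vee$ in $\Cat_{\infty/S^\op}^{\cocart}$. Since our two $(-)^\vee$ functors are defined symmetrically, these two families of equivalences are exactly the unit and counit of an adjoint equivalence. One must check these equivalences are natural in $p$; this follows because the equivalence of Pr.~\ref{prp:doubledual} is constructed by an explicit identification of simplices (\ref{nul:simplicesofdual}) compatible with maps of fibrations, so it assembles to a natural transformation of functors of $\infty$-categories. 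This establishes the top and bottom horizontal equivalences of the diagram and the fact that they are homotopy inverse to the composite of the vertical $\op$ maps with each other — i.e.\ the outer square commutes up to homotopy, granting that $\op\circ\op\simeq\id$.

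It remains to fill in the triangles involving the straightening equivalences $s$. The left-hand slanted map $s$ and the bottom-left slanted map $s$ are the standard (un)straightening equivalences; the right-hand ones likewise, noting $s$ for cocartesian fibrations over $S^\op$ lands in $\Fun(S^\op,\Cat_\infty)$ by \ref{rec:straighten}. The triangle asserting $s\circ(-)^\vee\simeq s$ (top) is precisely the content of the second Proposition: $p$ and $p^\vee$ are classified by the same functor $S^\op\to\Cat_\infty$. The triangle involving the vertical map $\op\circ-:\Fun(S^\op,\Cat_\infty)\to\Fun(S^\op,\Cat_\infty)$ and the $\op$ on fibration categories is the statement that taking fiberwise opposites of a (co)cartesian fibration postcomposes its classifying functor with $\op$, which is recalled in \ref{rec:straighten} (and discussed in the first \texttt{nul} of the Overview); the bottom triangle $s\circ(-)^\vee\simeq s$ is the dual of the second Proposition. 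Chasing these identifications around shows every face of the hexagon commutes up to canonical homotopy, and a routine check (using that all maps in sight are equivalences, so that the space of homotopies filling the outer boundary is connected once the faces are filled) promotes this to a single coherent homotopy for the whole diagram.

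The main obstacle I anticipate is not any individual commutativity but \emph{naturality}: showing that the fiberwise equivalence $(p^\vee)_s\simeq X_s$ and the double-dual equivalence $p^{\vee\vee}\simeq p$ are natural in the fibration, i.e.\ upgrade from pointwise equivalences to equivalences of functors $S^\op\to\Cat_\infty$ (resp.\ of objects of $\Cat_{\infty/S}^{\cart}$). This is exactly where the unfurling construction of \cite{mack1} does the real work — it produces the dual as the unfurling of an explicit functor out of an effective Burnside $\infty$-category, and functoriality of unfurling in the input datum is what gives naturality for free. I would therefore organize the proof of the second and third Propositions (hence of the theorem) around identifying $X^\vee$ with such an unfurling, after which the coherence of the hexagon reduces to formal manipulations with the straightening equivalence and the involution $\op$.
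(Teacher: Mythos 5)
Your proposal correctly identifies Pr.~\ref{prp:doubledual} as the key input to the claim that $(-)^\vee$ is an equivalence of $\infty$-categories (this matches the paper's Corollary), and you are right to worry about upgrading pointwise fiber equivalences to naturality. However, the proposed route through the second and third Propositions of the Overview is circular: the paper explicitly states that those propositions are \emph{subsumed in} the main theorem and offers no independent proof of them. When you write that the top triangle ``is precisely the content of the second Proposition,'' you are assuming exactly what must be proven.

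The genuine difficulty is not naturality in $p$ — you are right that this falls out of the unfurling construction, and the Corollary's proof handles it in one line. The difficulty is the following: once you know that unstraighten-then-dualize-then-straighten is a natural-in-$S$ autoequivalence of $\Fun(S^{\op},\Cat_\infty)$, you still must determine \emph{which} autoequivalence it is. There is a genuine rival candidate, namely postcomposition with $\op\colon\Cat_\infty\to\Cat_\infty$, and nothing about the unfurling construction rules it out a priori by ``formal manipulations.'' Your remark that ``the space of homotopies filling the outer boundary is connected once the faces are filled'' is exactly where this is swept under the rug: the space of autoequivalences of $\Cat_\infty$ has two components, so the outer boundary does \emph{not} uniquely determine the filling. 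The paper resolves this with a heavy hammer: it uses Yoneda plus naturality in $S$ to realize the composite as an autoequivalence of $\widehat{\Cat}_\infty$ itself, and then invokes the Unicity Theorem of To\"en/Lurie/Barwick--Schommer-Pries to conclude that this autoequivalence is either $\id$ or $\op$, finally checking $S=\Delta^0$ to rule out $\op$. Your proposal does not mention this input and has no substitute for it; that is the gap.
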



\section{Twisted arrow $\infty$-categories}\label{sect:twarr}

\begin{dfn} If $X$ is an $\infty$-category (i.e., a quasicategory), then the \emph{twisted arrow $\infty$-category} $\widetilde{\mathcal{O}}(X)$ is the simplicial set given by the formula
\begin{equation*}
\widetilde{\mathcal{O}}(X)_n=\Mor(\Delta^{n,\op}\star\Delta^n,X)\cong X_{2n+1}.
\end{equation*}
The two inclusions
\begin{equation*}
  \into{\Delta^{n,\op}}{\Delta^{n,\op}\star\Delta^n}\textrm{\quad and\quad}\into{\Delta^{n}}{\Delta^{n,\op}\star\Delta^n}
\end{equation*}
give rise to a map of simplicial sets
\begin{equation*}
\fromto{\widetilde{\mathcal{O}}(X)}{X^{\op}\times X}.
\end{equation*}
\end{dfn}

\begin{nul}\label{nul:twarrofnerveisverve} The vertices of $\widetilde{\mathcal{O}}(X)$ are edges of $X$; an edge of $\widetilde{\mathcal{O}}(X)$ from $\fromto{u}{v}$ to $\fromto{x}{y}$ can be viewed as a commutative diagram (up to chosen homotopy)
\begin{equation*}
\begin{tikzpicture} 
\matrix(m)[matrix of math nodes, 
row sep=4ex, column sep=4ex, 
text height=1.5ex, text depth=0.25ex] 
{u&x\\ 
v&y\\}; 
\path[>=stealth,->,font=\scriptsize] 
(m-1-2) edge (m-1-1) 
edge (m-2-2)
(m-1-1) edge (m-2-1)
(m-2-1) edge (m-2-2); 
\end{tikzpicture}
\end{equation*}
When $X$ is the nerve of an ordinary category $C$, $\widetilde{\mathcal{O}}(X)$ is isomorphic to the nerve of the twisted arrow category of $C$ in the sense of \cite{MR705421}. When $X$ is an $\infty$-category, our terminology is justified by the following.
\end{nul}

\begin{prp}[Lurie, \protect{\cite[Pr. 4.2.3]{DAGX}}]\label{prp:twarrisinfincat} If $X$ is an $\infty$-category, then the functor $\fromto{\widetilde{\mathcal{O}}(X)}{X^{\op}\times X}$ is a left fibration; in particular, $\widetilde{\mathcal{O}}(X)$ is an $\infty$-category.
\end{prp}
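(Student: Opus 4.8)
The proof falls naturally into two halves: first, establish that $\widetilde{\mathcal{O}}(X)\to X^{\op}\times X$ is a left fibration; second, deduce that $\widetilde{\mathcal{O}}(X)$ is an $\infty$-category. The second half is purely formal: a left fibration has the right lifting property against all inner horn inclusions $\Lambda^n_k\hookrightarrow\Delta^n$ for $0<k<n$ (indeed against all horn inclusions except the outer ones $\Lambda^n_n$), and since $X^{\op}\times X$ is an $\infty$-category — hence admits inner-horn fillers — so does $\widetilde{\mathcal{O}}(X)$ by composing the filler in the base with the lift. So the entire content is in showing the map is a left fibration.

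The plan for the first half is to verify directly that $\widetilde{\mathcal{O}}(X)\to X^{\op}\times X$ has the right lifting property against every left-horn inclusion $\Lambda^n_k\hookrightarrow\Delta^n$ with $0\le k<n$. Unwinding the definition $\widetilde{\mathcal{O}}(X)_m = \Mor(\Delta^{m,\op}\star\Delta^m, X)$, a lifting problem of this shape translates, via the standard adjunction manipulations for join and the identification $\Delta^{m,\op}\star\Delta^m = (\Delta^m)^{\op}\star\Delta^m$, into a lifting problem for $X\to\ast$ against an inclusion of simplicial sets built from $\Lambda^n_k\hookrightarrow\Delta^n$ by applying the pushout-join construction $A\mapsto A^{\op}\star A$ relative to the corresponding construction on the boundary data. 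Concretely: the pair $(\Lambda^n_k\subseteq\Delta^n)$ gives rise to the inclusion of the simplicial set
\begin{equation*}
(\Delta^{n,\op}\star\Lambda^n_k)\cup_{(\Lambda^n_k)^{\op}\star\Lambda^n_k}((\Lambda^n_k)^{\op}\star\Delta^n)\ \hookrightarrow\ \Delta^{n,\op}\star\Delta^n,
\end{equation*}
and the claim is that this inclusion is inner anodyne (in fact a trivial cofibration in the Joyal model structure). Granting that, the lift against $X\to\ast$ exists because $X$ is an $\infty$-category, and one checks the solution is compatible with the projection to $X^{\op}\times X$, which amounts to tracking that the two ``end'' inclusions $\Delta^{n,\op}$ and $\Delta^n$ land where they should.

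The combinatorial heart — and the step I expect to be the main obstacle — is proving that the displayed pushout-product-style inclusion of joins is inner anodyne. The cleanest route is to use the fact that the join functor $(-)^{\op}\star(-)\colon \mathrm{sSet}\times\mathrm{sSet}\to\mathrm{sSet}$ interacts well with the Joyal model structure: one shows that if $i\colon A\hookrightarrow B$ is inner anodyne and $j\colon C\hookrightarrow D$ is any monomorphism, then the pushout-join $i\mathbin{\star}j$ is inner anodyne, and symmetrically. Applying this with $i = j = (\Lambda^n_k\hookrightarrow\Delta^n)$ — where at least one factor is inner anodyne precisely because $0<k<n$ is the inner condition, but here we must be careful: $k<n$ but $k$ may be $0$, so $\Lambda^n_k\hookrightarrow\Delta^n$ need only be \emph{left} anodyne, not inner anodyne. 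The correct statement, then, is the one-sided refinement: if $i$ is left anodyne (respectively right anodyne) and $j$ is a monomorphism, then $i\mathbin{\star}\mathrm{op}(j)$ is left anodyne. This kind of ``join pushout-product'' lemma for the covariant/contravariant model structures is exactly the tool Lurie uses, and the bookkeeping with opposites — remembering that the left factor of the join is $\Delta^{n,\op}$, so a left-anodyne map in the right variable becomes, after applying $(-)^{\op}$, a right-anodyne map feeding into the left slot — is the part most prone to sign errors. Once that lemma is in hand, specializing to $i$ the left-horn inclusion and $j$ its opposite closes the argument.

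Rather than reprove the join pushout-product lemma from scratch (which is the genuinely technical ingredient and is already in the literature), I would cite it and simply assemble the pieces: (i) translate the left-horn lifting problem for $\widetilde{\mathcal{O}}(X)\to X^{\op}\times X$ into a lifting problem against a join-pushout inclusion; (ii) invoke the lemma to see that inclusion is anodyne of the relevant one-sided type, hence a trivial cofibration against which $X$ (being fibrant in the Joyal model structure) has a lift; (iii) conclude that $\widetilde{\mathcal{O}}(X)\to X^{\op}\times X$ is a left fibration; (iv) deduce formally that $\widetilde{\mathcal{O}}(X)$ is an $\infty$-category since left fibrations over $\infty$-categories have $\infty$-categorical total spaces. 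The only real risk is getting the variance wrong in step (ii), so the safest presentation spells out the two ``outer'' edges of $\Delta^{n,\op}\star\Delta^n$ explicitly and checks by hand on low-dimensional simplices that a left-horn filler is being produced, not a right-horn one.
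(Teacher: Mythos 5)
The paper does not prove this result at all: it is stated as a citation to Lurie's DAGX, Pr.~4.2.3 (in later versions this is in \emph{Higher Algebra}, \S 5.2.1). So there is no ``paper's proof'' to compare against; what you have written is a reconstruction of Lurie's argument. The formal second half of your argument---that the total space of a left fibration over an $\infty$-category is an $\infty$-category, since left fibrations in particular lift against inner horns---is correct and is indeed how the ``in particular'' clause is deduced.

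However, the main combinatorial step has two genuine gaps. First, the source of the inclusion you display is not the right simplicial set. The twisted arrow construction is restriction along the edgewise subdivision $\mathrm{sd}$, which is a left Kan extension and so preserves colimits, but the join does \emph{not} commute with colimits jointly in its two variables; consequently $\mathrm{sd}(\Lambda^n_k)$ is \emph{not} $(\Lambda^n_k)^{\op}\star\Lambda^n_k$, and the correct source, namely $\mathrm{sd}(\Lambda^n_k)\cup_{(\Lambda^n_k)^{\op}\sqcup\Lambda^n_k}\bigl((\Delta^n)^{\op}\sqcup\Delta^n\bigr)$ sitting inside $(\Delta^n)^{\op}\star\Delta^n$, is strictly smaller than the pushout-join $\bigl((\Lambda^n_k)^{\op}\star\Delta^n\bigr)\cup_{(\Lambda^n_k)^{\op}\star\Lambda^n_k}\bigl((\Delta^n)^{\op}\star\Lambda^n_k\bigr)$ you wrote. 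Concretely, a simplex of $(\Delta^n)^{\op}\star\Delta^n$ that merely omits some vertex $j\neq k$ from the forward half but hits every vertex of the opposite half already lies in your pushout-join but not in $\mathrm{sd}(\Lambda^n_k)$; $\mathrm{sd}(\Lambda^n_k)$ requires omitting some $j\neq k$ \emph{simultaneously} from both halves. So the lifting problem you set up is not the one you actually need to solve.

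Second, and more fatally, even if the translation were correct, the join pushout-product lemmas from HTT \S 2.1.2 only give that the resulting map is \emph{left anodyne} (or right anodyne), as you yourself write in step~(ii). That is not enough: the target of the transposed lifting problem is $X\to\ast$, and $X$ is only a quasicategory, hence Joyal-fibrant but not Kan-fibrant, so $X\to\ast$ lifts against inner anodyne maps (more generally Joyal trivial cofibrations) but \emph{not} against arbitrary left anodyne maps. For example $\{0\}\hookrightarrow\Delta^1$ is left anodyne and $X\to\ast$ certainly does not lift against it. What is actually needed, and what Lurie proves, is the sharper and genuinely nontrivial claim that the relevant inclusion into $\mathrm{sd}(\Delta^n)$ is \emph{inner} anodyne; this does not follow by citing the off-the-shelf join pushout-product lemma, and it is where the real combinatorial work of the proposition lives.
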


\begin{exm} To illustrate, for any object $\mathbf{p}\in\Delta$, the $\infty$-category $\widetilde{\mathcal{O}}(\Delta^p)$ is the nerve of the category
\begin{equation*}
\begin{tikzpicture} 
\matrix(m)[matrix of math nodes, 
row sep={6ex,between origins}, column sep={6ex,between origins}, 
text height=1.5ex, text depth=0.25ex] 
{&&&&&0\overline{0}&&&&&\\
&&&&0\overline{1}&&1\overline{0}&&&&\\
&&&\revddots&&\xdots&&\ddots&&&\\
&&02&&13&&\overline{3}\overline{1}&&\overline{2}\overline{0}&&\\
&01&&12&&\xdots&&\overline{2}\overline{1}&&\overline{1}\overline{0}&\\
00&&11&&22&&\overline{2}\overline{2}&&\overline{1}\overline{1}&&\overline{0}\overline{0}\\}; 
\path[>=stealth,->,font=\scriptsize] 
(m-2-5) edge (m-1-6) 
(m-2-7) edge (m-1-6)
(m-3-4) edge (m-2-5)
(m-3-6) edge (m-2-5)
(m-3-6) edge (m-2-7)
(m-3-8) edge (m-2-7)
(m-4-3) edge (m-3-4)
(m-4-5) edge (m-3-4)
(m-4-5) edge (m-3-6)
(m-4-7) edge (m-3-6)
(m-4-7) edge (m-3-8)
(m-4-9) edge (m-3-8)
(m-5-2) edge (m-4-3)
(m-5-4) edge (m-4-3)
(m-5-4) edge (m-4-5)
(m-5-6) edge (m-4-5)
(m-5-6) edge (m-4-7)
(m-5-8) edge (m-4-7)
(m-5-8) edge (m-4-9)
(m-5-10) edge (m-4-9)
(m-6-1) edge (m-5-2)
(m-6-3) edge (m-5-2)
(m-6-3) edge (m-5-4)
(m-6-5) edge (m-5-4)
(m-6-5) edge (m-5-6)
(m-6-7) edge (m-5-6)
(m-6-7) edge (m-5-8)
(m-6-9) edge (m-5-8)
(m-6-9) edge (m-5-10) 
(m-6-11) edge (m-5-10);
\end{tikzpicture}
\end{equation*}
(Here we write $\overline{n}$ for $p-n$.)
\end{exm}

In \cite[\S 4.2]{DAGX}, Lurie goes a step further and gives a characterization the left fibrations that (up to equivalence) are of the form $\fromto{\widetilde{\mathcal{O}}(X)}{X^{\op}\times X}$. We'll discuss (and use!) this result in more detail in \S \ref{reltwarr}.


\section{The definition of the dual}\label{sect:dualdfn} We now give a precise definition of the dual of a cartesian fibration and, conversely, the dual of a cocartesian fibration. The definitions themselves will not depend on previous work, but the proofs that the constructions have the desired properties follow trivially from general facts about the unfurling construction of the first author \cite[Lm. 11.4 and 11.5]{mack1}.

\begin{ntn} Throughout this section, suppose $S$ and $T$ two $\infty$-categories, $p:\fromto{X}{S}$ a cartesian fibration, and $q:\fromto{Y}{T}$ a cocartesian fibration.

As in Nt. \ref{rec:leftfib}, denote by $\iota S\subset S$ the subcategory that contains all the objects and whose morphisms are equivalences. Denote by $\iota^SX\subset X$ the subcategory that contains all the objects, whose morphisms are $p$-cartesian edges.

Similarly, denote by $\iota T\subset T$ the subcategory that contains all the objects, whose morphisms are equivalences. Denote by $\iota_TY\subset Y$ the subcategory that contains all the objects and whose morphisms are $q$-cocartesian edges.
\end{ntn}

\begin{nul} It is easy to see that
\begin{equation*}
(S,\iota S,S)\textrm{\quad and\quad}(X,X\times_S\iota S,\iota^SX)
\end{equation*}
are adequate triples of $\infty$-categories in the sense of \cite[Df. 5.2]{mack1}. Dually,
\begin{equation*}
(T^{\op},\iota T^{\op},T^{\op})\textrm{\quad and\quad}(Y^{\op},Y^{\op}\times_{T^{\op}}\iota T^{\op},(\iota_{T}Y)^{\op})
\end{equation*}
are adequate triples of $\infty$-categories.

Furthermore, the cartesian fibrations $p:\fromto{X}{S}$ and $q:\fromto{Y^{{\op}}}{T^{{\op}}}$ are adequate inner fibrations over $(S,\iota S,S)$ and $(T^{\op},\iota T^{\op},T^{\op})$ (respectively) in the sense of \cite[Df. 10.3]{mack1}.
\end{nul}

\begin{dfn} For any $\infty$-category $C$ and any two subcategories $C_{\dag}\subset C$ and $C^{\dag}\subset C$ that each contain all the equivalences, we define $A^{\eff}(C,C_{\dag},C^{\dag})$ as the simplicial set whose $n$-simplices are those functors
\begin{equation*}
x:\fromto{\widetilde{\mathcal{O}}(\Delta^n)^{\op}}{C}
\end{equation*}
such that for any integers $0\leq i\leq k\leq\ell\leq j\leq n$, the square
\begin{equation*}
\begin{tikzpicture} 
\matrix(m)[matrix of math nodes, 
row sep=4ex, column sep=4ex, 
text height=1.5ex, text depth=0.25ex] 
{x_{ij}&x_{kj}\\ 
x_{i\ell}&x_{k\ell}\\}; 
\path[>=stealth,->,font=\scriptsize] 
(m-1-1) edge[>->] (m-1-2) 
edge[->>] (m-2-1) 
(m-1-2) edge[->>] (m-2-2) 
(m-2-1) edge[>->] (m-2-2); 
\end{tikzpicture}
\end{equation*}
is a pullback in which the morphisms $\cofto{x_{ij}}{x_{kj}}$ and $\cofto{x_{i\ell}}{x_{k\ell}}$ lie in $C_{\dag}$ and the morphisms $\fibto{x_{ij}}{x_{i\ell}}$ and $\fibto{x_{kj}}{x_{k\ell}}$ lie in $C^{\dag}$.

When $A^{\eff}(C,C_{\dag},C^{\dag})$ is an $\infty$-category (which is the case, for example, when $(C,C_{\dag},C^{\dag})$ is an adequate triple of $\infty$-categories in the sense of \cite[Df. 5.2]{mack1}), we call it the \textbf{\emph{effective Burnside $\infty$-category of $(C,C_{\dag},C^{\dag})$}}.

Note that the projections $\fromto{\widetilde{\mathcal{O}}(\Delta^n)^{\op}}{\Delta^n}$ and $\fromto{\widetilde{\mathcal{O}}(\Delta^n)^{\op}}{(\Delta^n)^{\op}}$ induce inclusions
\begin{equation*}
\into{C_{\dag}}{A^{\eff}(C,C_{\dag},C^{\dag})}\textrm{\quad and\quad}\into{(C^{\dag})^{\op}}{A^{\eff}(C,C_{\dag},C^{\dag})}.
\end{equation*}
\end{dfn}

Now it is easy to see that $p$ and $q$ induce morphisms of simplicial sets
\begin{equation*}
\overline{p}:\fromto{A^{\eff}(X,X\times_S\iota S,\iota^SX)}{A^{\eff}(S,\iota S,S)}
\end{equation*}
and
\begin{equation*}
\overline{q}:\fromto{A^{\eff}(Y^{\op},Y^{\op}\times_{T^{\op}}\iota T^{\op},(\iota_TY)^{\op})^{\op}}{A^{\eff}(T^{\op},\iota T^{\op},T^{\op})^{\op}},
\end{equation*}
respectively. We wish to see that $\overline{p}$ is a cocartesian fibration and that $\overline{q}$ is a cartesian fibration, but it's not even clear that they are inner fibrations.

Luckily, the fundamental unfurling lemmas \cite[Lm. 11.4 and Lm. 11.5]{mack1} of the first author address exactly this point. The basic observation is that the unfurling
\[\Upsilon(X/(S,\iota S,S))\textrm{\qquad(respectively,\quad}\Upsilon(Y^{\op}/(T^{\op},\iota T^{\op},T^{\op}))\textrm{\quad)}\]
of the adequate inner fibration $p$ (resp., $q^{\op}$) \cite[Df. 11.3]{mack1} is then the effective Burnside $\infty$-category
\[A^{\eff}(X,X\times_S\iota S,\iota^SX)\textrm{\qquad(resp.,\quad}A^{\eff}(Y^{\op},Y^{\op}\times_{T^{\op}}\iota T^{\op},(\iota_TY)^{\op})\textrm{\quad),}\]
and the functor $\Upsilon(p)$ (resp., the functor $\Upsilon(q^{\op})^{\op}$) is the functor $\overline{p}$ (resp., the functor $\overline{q}$) described above. The fundamental lemmas \cite[Lm. 11.4 and Lm. 11.5]{mack1} now immediately imply the following.
\begin{prp} The simplicial set $A^{\eff}(S,\iota S,S)$ is an $\infty$-category, and the functor $\overline{p}$ is a cocartesian fibration. Furthermore, a morphism of $A^{\eff}(X,X\times_S\iota S,\iota^SX)$ of the form
\begin{equation*}
\begin{tikzpicture} 
\matrix(m)[matrix of math nodes, 
row sep=2ex, column sep=3ex, 
text height=1.5ex, text depth=0.25ex] 
{&u&\\ 
x&&y\\}; 
\path[>=stealth,->,inner sep=0.9pt,font=\scriptsize] 
(m-1-2) edge node[above left]{$f$} (m-2-1) 
edge node[above right]{$g$} (m-2-3); 
\end{tikzpicture}
\end{equation*}
is $\overline{p}$-cocartesian just in case $g$ is an equivalence.

Dually, the simplicial set $A^{\eff}(T,T,\iota T)$ is an $\infty$-category, and the functor $\overline{q}$ is a cartesian fibration. Furthermore, a morphism of $A^{\eff}(Y^{\op},Y^{\op}\times_{T^{\op}}\iota T^{\op},(\iota_TY)^{\op})^{\op}$ of the form
\begin{equation*}
\begin{tikzpicture} 
\matrix(m)[matrix of math nodes, 
row sep=2ex, column sep=3ex, 
text height=1.5ex, text depth=0.25ex] 
{&u&\\ 
x&&y\\}; 
\path[>=stealth,<-,inner sep=0.9pt,font=\scriptsize] 
(m-1-2) edge node[above left]{$f$} (m-2-1) 
edge node[above right]{$g$} (m-2-3); 
\end{tikzpicture}
\end{equation*}
is $\overline{q}$-cocartesian just in case $f$ is an equivalence.
\end{prp}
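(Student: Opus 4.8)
The plan is to leverage the unfurling lemmas of \cite{mack1} directly. The key observation—which the text has already set up—is that the simplicial set $A^{\eff}(X,X\times_S\iota S,\iota^SX)$ is precisely the unfurling $\Upsilon(X/(S,\iota S,S))$ of the adequate inner fibration $p$, and that under this identification the functor $\overline{p}$ becomes $\Upsilon(p)$. So the entire first half of the proposition is simply a matter of citing \cite[Lm. 11.4]{mack1}, which asserts both that the target $A^{\eff}(S,\iota S,S)$ is an $\infty$-category and that $\Upsilon(p)$ is a cocartesian fibration. The characterization of the cocartesian edges also comes straight from that lemma: an edge of the unfurling is $\Upsilon(p)$-cocartesian exactly when the ``backward'' leg (here, $g$, the leg lying over a degenerate edge of $S$) is an equivalence in $X$. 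I would state this identification explicitly and then quote the lemma; no independent verification of the inner fibration condition is needed, since that is part of the content of the cited result.

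For the dual statement, the strategy is to apply the same machinery to $q^{\op}:\fromto{Y^{\op}}{T^{\op}}$, which—as noted just above—is an adequate inner fibration over the adequate triple $(T^{\op},\iota T^{\op},T^{\op})$. By \cite[Lm. 11.4]{mack1} again, $A^{\eff}(T^{\op},\iota T^{\op},T^{\op})$ is an $\infty$-category and $\Upsilon(q^{\op})$ is a cocartesian fibration with cocartesian edges those whose backward leg is an equivalence. Now I would take opposites: $A^{\eff}(T^{\op},\iota T^{\op},T^{\op})^{\op}\cong A^{\eff}(T,T,\iota T)$ (one checks directly from the definition of $A^{\eff}$ that reversing all arrows in $C$ swaps the roles of $C_{\dag}$ and $C^{\dag}$ and replaces each defining pullback square by its opposite, hence $A^{\eff}(C,C_\dag,C^\dag)^{\op}\cong A^{\eff}(C^{\op},C^{\op\dag},C^{\op}_\dag)$), and a cocartesian fibration becomes a cartesian fibration upon passing to opposites on both source and target. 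Thus $\overline{q}=\Upsilon(q^{\op})^{\op}$ is a cartesian fibration over $A^{\eff}(T,T,\iota T)$, and an edge is $\overline{q}$-cartesian precisely when the corresponding edge of $\Upsilon(q^{\op})$ is cocartesian, i.e.\ when its backward leg—which after the op-flip is the leg $f$ lying over a degenerate edge of $T$—is an equivalence. (I note in passing that the proposition as stated says ``$\overline{q}$-cocartesian'' where ``$\overline{q}$-cartesian'' is presumably meant; I would phrase the conclusion as the latter.)

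The only genuinely non-formal ingredient is the identification of the effective Burnside $\infty$-category with the unfurling; but this is not something to reprove, since the unfurling is \emph{defined} in \cite[Df. 11.3]{mack1} in terms of exactly the same $\widetilde{\mathcal{O}}(\Delta^n)^{\op}$-diagrams with the same pullback and cartesian/cocartesian-edge conditions, so the identification is a matter of unwinding definitions and matching the adequate-inner-fibration data $(p,(S,\iota S,S))$ against the effective-Burnside data $(X,X\times_S\iota S,\iota^SX)$. I expect the main point requiring care is purely bookkeeping: keeping straight which of the two legs $f,g$ plays the role of the ``egressive'' (pullback-along) versus ``ingressive'' (identity-covering) edge after all the opposites have been taken, so that the edge-characterizations come out as advertised. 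Everything else is an appeal to \cite[Lm. 11.4 and Lm. 11.5]{mack1}.
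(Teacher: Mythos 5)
Your proposal is essentially the paper's own proof: identify the effective Burnside $\infty$-category with the unfurling $\Upsilon(X/(S,\iota S,S))$ (and $\Upsilon(Y^{\op}/(T^{\op},\iota T^{\op},T^{\op}))$), match $\overline{p}$ and $\overline{q}$ with $\Upsilon(p)$ and $\Upsilon(q^{\op})^{\op}$, and then cite the fundamental unfurling lemmas of \cite{mack1}. Your extra remarks — spelling out the op-symmetry $A^{\eff}(C,C_{\dag},C^{\dag})^{\op}\cong A^{\eff}(C^{\op},(C^{\dag})^{\op},(C_{\dag})^{\op})$, and flagging the ``$\overline{q}$-cocartesian'' for what is surely meant to read ``$\overline{q}$-cartesian'' — are both correct and usefully fill in what the paper leaves implicit.
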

 
\begin{dfn}\label{dfn:dualdefn} The \textbf{\emph{dual}} of $p$ is the projection
\begin{equation*}
p^{\vee}:\fromto{X^{\vee}\coloneq A^{\eff}(X,X\times_S\iota S,\iota^SX)\times_{A^{\eff}(S,\iota S,S)}S^{\op}}{S^{\op}},
\end{equation*}
which is a cocartesian fibration. Dually, the \textbf{\emph{dual}} of $q$ is the projection
\begin{equation*}
q^{\vee}:\fromto{Y^{\vee}\coloneq A^{\eff}(Y^{\op},Y^{\op}\times_{T^{\op}}\iota T^{\op},(\iota_TY)^{\op})^{\op}\times_{A^{\eff}(T^{\op},\iota T^{\op},T^{\op})^{\op}}T}{T},
\end{equation*}
which is a cartesian fibration.
\end{dfn}

\begin{nul} The formation of the dual and the formation of the opposite are by construction dual operations with respect to each other; that is, one has by definition
\begin{equation*}
(p^{\op})^{\vee}=(p^{\vee})^{\op}\textrm{\quad and\quad}(q^{\op})^{\vee}=(q^{\vee})^{\op}.
\end{equation*}
\end{nul}

\begin{nul} Observe that the inclusions
\begin{equation*}
\into{S^{\op}}{A^{\eff}(S,\iota S,S)}\textrm{\quad and\quad}\into{T}{A^{\eff}(T^{\op},\iota T^{\op},T^{\op})^{\op}}
\end{equation*}
are each equivalences. Consequently, the projections
\begin{equation*}
\fromto{X^{\vee}}{A^{\eff}(X,X\times_S\iota S,\iota^SX)}\textrm{\quad and\quad}\fromto{Y^{\vee}}{A^{\eff}(Y^{\op},Y^{\op}\times_{T^{\op}}\iota T^{\op},(\iota_TY)^{\op})^{\op}}
\end{equation*}
are equivalences as well.
\end{nul}

\begin{nul}\label{nul:simplicesofdual} Note also that the description of $X^{\vee}$ and $Y^{\vee}$ given in the introduction coincides with the one given here: an $n$-simplex of $X^{\vee}$, for instance, is a diagram
\begin{equation*}
\begin{tikzpicture} 
\matrix(m)[matrix of math nodes, 
row sep={6ex,between origins}, column sep={6ex,between origins}, 
text height=1.5ex, text depth=0.25ex] 
{&&&&&x_{0\overline{0}}&&&&&\\
&&&&x_{0\overline{1}}&&x_{1\overline{0}}&&&&\\
&&&\revddots&&\xdots&&\ddots&&&\\
&&x_{02}&&x_{13}&&x_{\overline{3}\overline{1}}&&x_{\overline{2}\overline{0}}&&\\
&x_{01}&&x_{12}&&\xdots&&x_{\overline{2}\overline{1}}&&x_{\overline{1}\overline{0}}&\\
x_{00}&&x_{11}&&x_{22}&&x_{\overline{2}\overline{2}}&&x_{\overline{1}\overline{1}}&&x_{\overline{0}\overline{0}}\\}; 
\path[>=stealth,<-,font=\scriptsize] 
(m-2-5) edge (m-1-6) 
(m-2-7) edge (m-1-6)
(m-3-4) edge (m-2-5)
(m-3-6) edge (m-2-5)
(m-3-6) edge (m-2-7)
(m-3-8) edge (m-2-7)
(m-4-3) edge (m-3-4)
(m-4-5) edge (m-3-4)
(m-4-5) edge (m-3-6)
(m-4-7) edge (m-3-6)
(m-4-7) edge (m-3-8)
(m-4-9) edge (m-3-8)
(m-5-2) edge (m-4-3)
(m-5-4) edge (m-4-3)
(m-5-4) edge (m-4-5)
(m-5-6) edge (m-4-5)
(m-5-6) edge (m-4-7)
(m-5-8) edge (m-4-7)
(m-5-8) edge (m-4-9)
(m-5-10) edge (m-4-9)
(m-6-1) edge (m-5-2)
(m-6-3) edge (m-5-2)
(m-6-3) edge (m-5-4)
(m-6-5) edge (m-5-4)
(m-6-5) edge (m-5-6)
(m-6-7) edge (m-5-6)
(m-6-7) edge (m-5-8)
(m-6-9) edge (m-5-8)
(m-6-9) edge (m-5-10) 
(m-6-11) edge (m-5-10);
\end{tikzpicture}
\end{equation*}
in which any $j$-simplex of the form $x_{0j}\to x_{1j}\to\cdots\to x_{jj}$ covers a totally degenerate simplex of $S$ (i.e., a $j$-simplex in the image of $\fromto{S_0}{S_j}$), and all the morphisms $\fromto{x_{ij}}{x_{i\ell}}$ are $p$-cartesian.

In particular, note that the fibers $(X^{\vee})_s$ are equivalent to the fibers $X_s$, and the fibers $(Y^{\vee})_t$ are equivalent to the fibers $Y_t$.
\end{nul}


\section{The double dual}\label{sect:veevee}

\begin{prp}\label{prp:doubledual} Suppose $S$ and $T$ two $\infty$-categories, $p:\fromto{X}{S}$ a cartesian fibration, and $q:\fromto{Y}{T}$ a cocartesian fibration. There are natural equivalences
\begin{equation*}
p\simeq p^{\vee\vee}\textrm{\quad and\quad}q\simeq q^{\vee\vee}
\end{equation*}
of cartesian fibrations $\fromto{X}{S}$ and cocartesian fibrations $\fromto{Y}{T}$, respectively.
\end{prp}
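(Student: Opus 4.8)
The plan is to unwind the explicit description of the dual from \ref{nul:simplicesofdual} twice and exhibit a zig-zag of equivalences connecting $X$ to $X^{\vee\vee}$ over $S$; the statement for $q$ then follows formally by applying the cartesian case to $q^{\op}$ and using the identities $(p^{\op})^{\vee}=(p^{\vee})^{\op}$, $(q^{\op})^{\vee}=(q^{\vee})^{\op}$. So fix a cartesian fibration $p:\fromto{X}{S}$. Starting from $p$, the dual $p^{\vee}:\fromto{X^{\vee}}{S^{\op}}$ is a cocartesian fibration; its opposite $(p^{\vee})^{\op}=(p^{\op})^{\vee}:\fromto{(X^{\vee})^{\op}}{S}$ is again a cartesian fibration, and taking the dual once more produces the cocartesian fibration $((p^{\vee})^{\op})^{\vee}$ over $S^{\op}$, whose opposite is $p^{\vee\vee}$ over $S$. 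The key observation is that both $X$ and $X^{\vee\vee}$ are built, via iterated effective Burnside constructions, out of the single adequate inner fibration $p$ over $(S,\iota S,S)$, so one should be able to see the comparison at the level of $n$-simplices: an $n$-simplex of $X^{\vee\vee}$ should be recorded by a functor out of $\widetilde{\mathcal{O}}(\Delta^n)^{\op}$ (for the outer dual) whose values are themselves $m$-simplices of $X^{\vee}$ described by functors out of $\widetilde{\mathcal{O}}(\Delta^m)^{\op}$, subject to the cartesian/degeneracy conditions on both levels; contracting the redundant data should leave exactly an $n$-simplex of $X$.

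Concretely, the main steps I would carry out are: (1) Write down, using \ref{nul:simplicesofdual}, the explicit simplicial set underlying $X^{\vee\vee}$, i.e. describe an $n$-simplex as a compatible family of diagrams indexed by a ``twisted arrow of twisted arrows'' poset, with the appropriate edges $p$-cartesian and the appropriate edges over totally degenerate simplices of $S$. (2) Identify inside this poset the sub-diagram corresponding to the ``diagonal'' copy of $\Delta^n$, and show that restriction to it defines a map of simplicial sets $\fromto{X^{\vee\vee}}{X}$ over $S$ (naturality in $S$ and compatibility with the structure maps to $S$ are immediate from the construction). (3) Show this map is an equivalence. The cleanest route to (3) is probably not a direct simplicial-set computation but rather to use that $p^{\vee\vee}$ is a cartesian fibration (by two applications of the Proposition preceding Df. \ref{dfn:dualdefn}), that the comparison functor is a map of cartesian fibrations over $S$, and then to check it is a fiberwise equivalence: by the last sentence of \ref{nul:simplicesofdual}, $(X^{\vee})_s\simeq X_s$ and hence $(X^{\vee\vee})_s\simeq (X^{\vee})_s \simeq X_s$, and one must verify the diagonal-restriction map realizes precisely this composite equivalence on each fiber. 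A map of cartesian fibrations over $S$ that is an equivalence on every fiber is an equivalence, which finishes the proof.

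Alternatively — and this may be the slicker packaging — one can try to identify the double-dual construction with an unfurling of an unfurling and invoke an idempotency property of the effective Burnside construction: the triple $(X, X\times_S \iota S, \iota^S X)$ is ``self-dual'' in the sense that forming $A^{\eff}$, passing to the opposite, reorganizing the two distinguished subcategories, and forming $A^{\eff}$ again returns something canonically equivalent to the original over the analogously-transformed base. I would state this as a lemma about adequate triples and adequate inner fibrations, prove it by the $n$-simplex analysis of step (1)–(2) above, and then read off Pr. \ref{prp:doubledual} as the special case at hand.

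The hard part will be step (3) — specifically, checking that the evident diagonal-restriction map really is the fiberwise equivalence of \ref{nul:simplicesofdual} rather than merely \emph{some} fiberwise map, and tracking the two layers of opposites so that the $p$-cartesian conditions and the totally-degenerate conditions end up on the correct edges in the doubly-iterated diagram. The bookkeeping of which of the two twisted-arrow coordinates controls which condition, and how the outer and inner $(-)^{\op}$'s interact, is where it is easy to slip; once that combinatorics is pinned down, the fact that a fiberwise equivalence of cartesian fibrations over a common base is an equivalence does the rest, and naturality in $S$ is formal.
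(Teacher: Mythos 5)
Your plan follows the same broad contour as the paper's proof (describe $X^{\vee\vee}$ as diagrams indexed by a two-fold twisted arrow subdivision, then compare with $X$ over $S$), but step (2) contains a genuine gap: there is no ``diagonal'' copy of $\Delta^n$ inside $\widetilde{\mathcal{O}}^{(2)}(\Delta^n)^{\op}$ whose restriction gives the map you want. Tracking the variances, a simplicial map $\Delta^n\to\widetilde{\mathcal{O}}^{(2)}(\Delta^n)^{\op}$ sending $i$ to $(a_i,b_i,c_i,d_i)$ must have $a$ nondecreasing and $b,d$ nonincreasing with $a_i\leq b_i\leq c_i\leq d_i$, and if it is to lie over the identity of $\Delta^n$ (so that the resulting map is over $S$) one is forced into $i\mapsto(i,n,n,n)$. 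Composing an $n$-simplex $x:\widetilde{\mathcal{O}}^{(2)}(\Delta^n)^{\op}\to X$ of $X^{\vee\vee}$ along this map extracts the chain $x(0,n,n,n)\to\cdots\to x(n,n,n,n)$, which by condition (\ref{prp:doubledual}.1) consists entirely of $p$-cartesian edges --- i.e.\ it recovers a cartesian lift of the base simplex, not the simplex you are trying to recover. The conceptual obstruction is visible already at the level of a single morphism: a morphism of $X^{\vee\vee}$ is a span-of-spans $x\xleftarrow{\sim}u\to y\xleftarrow{\sim}v\to z$, and producing the corresponding morphism $x\to z$ of $X$ requires inverting the two equivalences and composing, neither of which is a strict simplicial operation.

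This is precisely why the paper does not build a direct map $X^{\vee\vee}\to X$. Instead it interposes an auxiliary cartesian fibration $p':X'\to S$, where an $n$-simplex of $X'$ is a functor $\mathcal{O}(\Delta^n)\to X$ (carrying $s$-cartesian edges to $p$-cartesian edges) together with compatible data over $S$, and produces a zigzag $X\xleftarrow{\ \alpha\ }X'\xrightarrow{\ \beta\ }X^{\vee\vee}$ over $S$: $\alpha$ restricts along the inclusion of identity arrows $\Delta^n\hookrightarrow\mathcal{O}(\Delta^n)$, and $\beta$ restricts along the map $\widetilde{\mathcal{O}}^{(2)}(\Delta^n)^{\op}\to\mathcal{O}(\Delta^n)$. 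Here $\beta$ is manifestly a fiberwise equivalence, but the substantive work --- which your proposal does not anticipate at all --- is Lm.\ \ref{olhum}: $\alpha$ is a \emph{trivial Kan fibration}. That lemma is the real content of the proof and requires a delicate combinatorial induction in the cartesian model structure on marked simplicial sets over $S$, via the ``effectively anodyne'' class and the filling argument of Lm.\ \ref{ojuny} (filtering $\mathcal{O}(\Delta^n)$ by the area statistic $A(x)$, flipvertices, generalized horns). Your closing appeal to ``a map of cartesian fibrations over $S$ that is a fiberwise equivalence is an equivalence'' is correct, but you would first have to exhibit a well-defined map of cartesian fibrations and verify it preserves cartesian edges, and it is exactly the construction of such a map (and the proof that $p'$ itself is a cartesian fibration) that consumes the bulk of the paper's argument. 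So the proposal identifies the right goal but misses both the need for the intermediate $X'$ and the combinatorial heart of the proof.
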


\noindent We postpone the proof (which is quite a chore) till the end of this section. In the meantime, let us reap the rewards of our deferred labor: in the notation of \ref{rec:straighten}, we obtain the following.

\begin{cor} The formation of the dual defines an equivalence of $\infty$-categories
\begin{equation*}
(-)^{\vee}:\Cat_{\infty/S}^{\cart}\ \tikz[baseline]\draw[>=stealth,<->,font=\scriptsize,inner sep=0.5pt](0,0.5ex)--node[above]{$\sim$}(0.75,0.5ex);\ \Cat_{\infty/S^{\op}}^{\cocart}:(-)^{\vee}
\end{equation*}
\begin{proof} The only thing left to observe that $(-)^{\vee}$ is a functor from the ordinary category of cartesian (respectively, cocartesian) fibrations to the ordinary category of cocartesian (resp., cartesian) fibrations, and this functor preserves weak equivalences (since they are defined fiberwise), whence it descends to a functor of $\infty$-categories $\fromto{\Cat_{\infty/S}^{\cart}}{\Cat_{\infty/S^{\op}}^{\cocart}}$ (resp., $\fromto{\Cat_{\infty/S^{\op}}^{\cocart}}{\Cat_{\infty/S}^{\cart}}$).
\end{proof}
\end{cor}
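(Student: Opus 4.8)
The plan is to reduce the statement to a purely $2$-categorical (or rather, model-categorical) bookkeeping exercise, using Proposition \ref{prp:doubledual} as the essential input. The key point is that $(-)^\vee$ has already been defined, in Definition \ref{dfn:dualdefn}, at the level of the underlying (ordinary) categories of marked or fibered simplicial sets: given a cartesian fibration $p : \fromto{X}{S}$, the simplicial set $X^\vee = A^{\eff}(X, X\times_S\iota S,\iota^S X)\times_{A^{\eff}(S,\iota S,S)}S^{\op}$ and its projection $p^\vee$ are manufactured functorially from $p$. So first I would spell out precisely that $(-)^\vee$ is a functor $\fromto{(\Cat_{\infty/S}^{\cart})^{\circ}}{(\Cat_{\infty/S^{\op}}^{\cocart})^{\circ}}$ between the ordinary categories underlying the relevant relative-nerve (or marked simplicial set) model structures — on objects it is $p\mapsto p^\vee$, and on morphisms a map $X\to X'$ of cartesian fibrations over $S$ (sending $p$-cartesian edges to $p'$-cartesian edges, and covering $\id_S$) induces a map $A^{\eff}(X,\dots)\to A^{\eff}(X',\dots)$ over $A^{\eff}(S,\iota S,S)$, hence after base change a map $X^\vee \to X'^\vee$ over $S^{\op}$. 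Functoriality (identities and composition) is immediate from the functoriality of $A^{\eff}(-)$ and of pullback.

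Next I would check that this functor preserves weak equivalences. The weak equivalences in $\Cat_{\infty/S}^{\cart}$ are the fiberwise equivalences (equivalently, the maps of underlying cartesian fibrations inducing equivalences on each fiber $X_s$), and likewise for cocartesian fibrations over $S^{\op}$. By \ref{nul:simplicesofdual}, the fiber $(X^\vee)_s$ is naturally equivalent to $X_s$; more precisely the construction $X\mapsto X^\vee$ carries a fiberwise equivalence $X\to X'$ to a map $X^\vee\to X'^\vee$ which on the fiber over $s$ is identified (compatibly) with $X_s\to X'_s$, hence is again a fiberwise equivalence. Therefore $(-)^\vee$ descends to a functor of $\infty$-categories $\fromto{\Cat_{\infty/S}^{\cart}}{\Cat_{\infty/S^{\op}}^{\cocart}}$, and symmetrically in the other direction with $S$ replaced by $S^{\op}$.

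Finally I would assemble the two directions into mutually inverse equivalences. Applying the construction twice, $p\mapsto p^{\vee\vee}$, is again a functor of $\infty$-categories $\fromto{\Cat_{\infty/S}^{\cart}}{\Cat_{\infty/S}^{\cart}}$; Proposition \ref{prp:doubledual} supplies, for each $p$, an equivalence $p\simeq p^{\vee\vee}$, and I would observe that these equivalences are natural in $p$ — i.e. that they assemble into a natural equivalence $\id\simeq (-)^{\vee\vee}$ of endofunctors. (Naturality is really part of what \ref{prp:doubledual} asserts, since the equivalence is constructed uniformly; if one wants to be scrupulous, one verifies that the comparison map $X\to X^{\vee\vee}$ of the proof is a map of fibrations over $S$, functorial in $p$, and a fiberwise equivalence.) The same applies with $p$ replaced by a cocartesian fibration $q$ over $S^{\op}$, giving $\id\simeq(-)^{\vee\vee}$ on $\Cat_{\infty/S^{\op}}^{\cocart}$. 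These two natural equivalences exhibit the pair of functors $(-)^\vee$ as homotopy inverse to one another, which is the claim. The main obstacle here is not any of these steps individually — each is routine once \ref{prp:doubledual} is in hand — but rather the bookkeeping needed to know that the double-dual equivalence of \ref{prp:doubledual} is genuinely natural, i.e. that we have a natural transformation of functors and not merely a pointwise collection of equivalences; this is exactly what makes the cited proposition "quite a chore," and everything else in the corollary is formal.
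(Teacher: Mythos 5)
Your proof proposal takes essentially the same route as the paper: the paper's (terse) argument likewise observes that $(-)^{\vee}$ is a functor on the ordinary categories of (co)cartesian fibrations that preserves weak equivalences (which are fiberwise), hence descends to a functor of $\infty$-categories, with the natural double-dual equivalence already supplied by Pr.~\ref{prp:doubledual}. You have simply unpacked the steps in more detail, including the observation that the naturality of $p\simeq p^{\vee\vee}$ is built into the statement of Pr.~\ref{prp:doubledual}.
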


Let's now prove the main theorem, Th. \ref{thm:main}. To do so, we must engage with some size issues.
\begin{ntn} We recall the set-theoretic technicialities and notation used in \cite[\S 1.2.15, Rk. 3.0.0.5]{HTT}. Let us choose two strongly inaccessible uncountable cardinals $\kappa<\lambda$. Denote by $\Cat_{\infty}$ (repsectively, $\Top$) $\infty$-category of $\kappa$-small $\infty$-categories (resp., of $\kappa$-small Kan complexes). Similarly, denote by $\widehat{\Cat}_{\infty}$ (resp., $\widehat{\Top}$) the $\infty$-category of $\lambda$-small $\infty$-categories (resp., of $\lambda$-small Kan complexes).

Note that $\Cat_{\infty}$ and $\Top$ are essentially $\lambda$-small and locally $\kappa$-small, whereas $\widehat{\Cat}_{\infty}$ and $\widehat{\Top}$ are only locally $\lambda$-small.
\end{ntn}

\begin{proof}[Proof of Th. \protect{\ref{thm:main}}] For any $\infty$-category $S$, consider the composite equivalence
\begin{equation*}
\Fun(S^{\op},\Cat_{\infty})\ \tikz[baseline]\draw[>=stealth,->,font=\scriptsize,inner sep=0.5pt](0,0.5ex)--node[above]{$\sim$}(0.75,0.5ex);\ \Cat_{\infty/S}^{\cart}\ \tikz[baseline]\draw[>=stealth,->,font=\scriptsize,inner sep=0.5pt](0,0.5ex)--node[above]{$\sim$}(0.75,0.5ex);\ \Cat_{\infty/S^{\op}}^{\cocart}\ \tikz[baseline]\draw[>=stealth,->,font=\scriptsize,inner sep=0.5pt](0,0.5ex)--node[above]{$\sim$}(0.75,0.5ex);\ \Fun(S^{\op},\Cat_{\infty}),
\end{equation*}
where the first equivalence is given by unstraightening, the second is given by the formation of the dual, and the last is given by straightening. It is easy to see that all of these equivalences are natural in $S$ \cite[Pr. 3.2.1.4(3)]{HTT}, so we obtain an autoequivalence $\eta$ of the functor $\Fun((-)^{\op},\Cat_{\infty}):\fromto{\Cat_{\infty}^{\op}}{\widehat{\Cat}_{\infty}}$, and thus of the functor 
\begin{equation*}
\Map((-)^{\op},\Cat_{\infty}):\fromto{\Cat_{\infty}^{\op}}{\widehat{\Top}}.
\end{equation*}
Now the left Kan extension of this functor along the inclusion $\into{\Cat_{\infty}^{\op}}{\widehat{\Cat}_{\infty}^{\op}}$ is the functor $h:\fromto{\widehat{\Cat}_{\infty}^{\op}}{\widehat{\Top}}$ represented by $\Cat_{\infty}$. The autoequivalence $\eta$ therefore also extends to an autoequivalence $\widehat{\eta}$ of $h$.

The Yoneda lemma now implies that $\widehat{\eta}$ is induced by an autoequivalence of $\widehat{\Cat}_{\infty}$ itself. By the Unicity Theorem of To\"en \cite{Toen}, Lurie \cite[Th. 4.4.1]{G}, and the first author and Chris Schommer-Pries \cite{BSP}, we deduce that $\widehat{\eta}$ is canonically equivalent either to $\id$ or to $\op$, and considering the case $S=\Delta^0$ shows that it's the former option.

This proves the commutativity of the triangle of equivalences
\begin{equation*}
\begin{tikzpicture} 
\matrix(m)[matrix of math nodes, 
row sep=6ex, column sep=2ex, 
text height=1.5ex, text depth=0.25ex] 
{\Cat_{\infty/S}^{\cart} &&\Cat_{\infty/S^{\op}}^{\cocart}\\ 
&\Fun(S^{\op},\Cat_{\infty}),&\\}; 
\path[>=stealth,<->,inner sep=1pt,font=\scriptsize] 
(m-1-1) edge node[above]{$(-)^{\vee}$} (m-1-3) 
edge node[below left]{$s$} (m-2-2)
(m-1-3) edge node[below right]{$s$} (m-2-2);
\end{tikzpicture}
\end{equation*}
and the commutativity of the remainder of the diagram in Th. \ref{thm:main} follows from duality.
\end{proof}

We've delayed the inevitable long enough. 

\begin{proof}[Proof of Lm. \protect{\ref{prp:doubledual}}] We prove the first assertion; the second is dual. 

To begin, let us unwind the definitions of the duals to describe $X^{\vee\vee}$ explicitly. First, for any $\infty$-category $C$, denote by $\widetilde{\mathcal{O}}^{(2)}(C)$ the simplicial set given by the formula
\begin{equation*}
\widetilde{\mathcal{O}}^{(2)}(C)_k=\Mor((\Delta^k)^{\op}\star\Delta^k\star(\Delta^k)^{\op}\star\Delta^k,C)\cong C_{4k+3}.
\end{equation*}
(This is a two-fold edgewise subdivision of $C$. It can equally well be described as a ``twisted $3$-simplex $\infty$-category of $C$.'') Now the $n$ simplices of $X^{\vee\vee}$ are those functors
\begin{equation*}
x:\fromto{\widetilde{\mathcal{O}}^{(2)}(\Delta^n)^{\op}}{X}
\end{equation*}
such that any $r$-simplex of the form
\begin{equation*}
x(ab_1c_1d_1)\to x(ab_2c_2d_2)\to\cdots\to x(ab_rc_rd_r)
\end{equation*}
covers a totally degenerate $r$-simplex of $S$, and, for any integers
\begin{equation*}
0\leq a'\leq a\leq b\leq b'\leq c'\leq c\leq d\leq d'\leq n
\end{equation*}
(which together represent an edge $\fromto{abcd}{a'b'c'd'}$ of $\widetilde{\mathcal{O}}^{(2)}(C)$) we have
\begin{enumerate}[(\ref{prp:doubledual}.1)]
\item the morphism $\fromto{x(a'bcd)}{x(abcd)}$ is $p$-cartesian;
\item the morphism $\fromto{x(ab'cd)}{x(abcd)}$ is an equivalence;
\item the morphism $\fromto{x(abcd')}{x(abcd)}$ is an equivalence.
\end{enumerate}
In other words, an object of $X^{\vee\vee}$ is an object of $X$, and a morphism of $X^{\vee\vee}$ is a diagram
\begin{equation*}
\begin{tikzpicture} 
\matrix(m)[matrix of math nodes, 
row sep=3ex, column sep=3ex, 
text height=1.5ex, text depth=0.25ex] 
{&u&&v&\\ 
x&&y&&z\\}; 
\path[>=stealth,->,inner sep=0.9pt,font=\scriptsize] 
(m-1-2) edge node[above left]{$\phi$} (m-2-1) 
edge node[above right]{$g$} (m-2-3)
(m-1-4) edge node[above left]{$\psi$} (m-2-3) 
edge node[above right]{$f$} (m-2-5); 
\end{tikzpicture}
\end{equation*}
in $X$ such that $\phi$, $g$, and $\psi$ all cover degenerate edges of $S$, and
\begin{enumerate}[{(\ref{prp:doubledual}}.1{-bis)}]
\item the morphism $f$ is $p$-cartesian;
\item the morphism $\psi$ is an equivalence;
\item the morphism $\phi$ is an equivalence.
\end{enumerate}

We will now construct a cartesian fibration $p':\fromto{X'}{S}$, a trivial fibration $\alpha:\equivto{X'}{X}$ over $S$ and a fiberwise equivalence $\beta:\equivto{X'}{X^{\vee\vee}}$ over $S$. These equivalences will all be the identity on objects. We will identify $X'$ with the subcategory of $X^{\vee\vee}$ whose morphisms are as above with $\psi$ and $\phi$ are degenerate; the inclusion will be the fiberwise equivalence $\beta$. The equivalence $\alpha:\equivto{X'}{X}$ will then in effect be obtained by composing $g$ and $f$.

To construct $p'$, we write, for any $\infty$-category $C$,
\begin{equation*}
\mathcal{O}(C)\coloneq\Fun(\Delta^1,C).
\end{equation*}
Note that the functor $s:\fromto{\mathcal{O}(C)}{C}$ given by evaluation at $0$ is a cartesian fibration (Ex. \ref{exm:scocartfib}). We now define $X'$ as the simplicial set whose $n$-simplices are those commutative squares
\begin{equation*}
\begin{tikzpicture} 
\matrix(m)[matrix of math nodes, 
row sep=4ex, column sep=4ex, 
text height=1.5ex, text depth=0.25ex] 
{\mathcal{O}(\Delta^n)&X\\ 
\Delta^n&S,\\}; 
\path[>=stealth,->,font=\scriptsize] 
(m-1-1) edge node[above]{$x$} (m-1-2) 
edge node[left]{$s$} (m-2-1) 
(m-1-2) edge node[right]{$p$} (m-2-2) 
(m-2-1) edge node[below]{$\sigma$} (m-2-2); 
\end{tikzpicture}
\end{equation*}
such that $x$ carries $s$-cartesian edges to $p$-cartesian edges. We define $p':\fromto{X'}{S}$ to be the map that carries an $n$-simplex as above to $\sigma\in S_n$.

We now construct the desired equivalences. The basic observation is that for any integer $k\geq0$, we have functors
\begin{equation*}
\Delta^k\ot \Delta^k\times\Delta^1\to \Delta^k\star\Delta^k\ \tikz[baseline]\draw[>=stealth,right hook->](0,0.5ex)--(0.5,0.5ex);\ \Delta^k\star(\Delta^k)^{\op}\star\Delta^k\star(\Delta^k)^{\op}:
\end{equation*}
on the left we have the projection onto the first factor; in the middle we have the functor corresponding to the unique natural transformation between the two inclusions $\into{\Delta^k}{\Delta^k\star\Delta^k}$; on the right we have the obvious inclusion. These functors induce, for any $n\geq 0$, functors
\begin{equation*}
\Delta^n\to\mathcal{O}(\Delta^n)\ot \widetilde{\mathcal{O}}^{(2)}(\Delta^n)^{\op}.
\end{equation*}
These in turn induce a zigzag of functors
\begin{equation*}
X\ \tikz[baseline]\draw[>=stealth,<-,font=\scriptsize](0,0.5ex)--node[above]{$\alpha$}(0.5,0.5ex);\ X'\ \tikz[baseline]\draw[>=stealth,->,font=\scriptsize](0,0.5ex)--node[above]{$\beta$}(0.5,0.5ex);\ X^{\vee\vee}
\end{equation*}
over $S$, which are each the identity on objects. On morphisms, $\alpha$ carries a morphism given by $x\to y\to z$ to the composite $\fromto{x}{z}$, and $\beta$ carries a morphism given by $x\to y\to z$ to the morphism of $X^{\vee\vee}$ given by the diagram
\begin{equation*}
\begin{tikzpicture} 
\matrix(m)[matrix of math nodes, 
row sep=3ex, column sep=3ex, 
text height=1.5ex, text depth=0.25ex] 
{&x&&y&\\ 
x&&y&&z.\\}; 
\path[>=stealth,->,inner sep=0.9pt,font=\scriptsize] 
(m-1-2) edge[-,double distance=1.5pt] (m-2-1) 
edge node[above right]{$g$} (m-2-3)
(m-1-4) edge[-,double distance=1.5pt] (m-2-3) 
edge node[above right]{$f$} (m-2-5); 
\end{tikzpicture}
\end{equation*}
We now have the following, whose proof we postpone for a moment.
\begin{lem} \label{olhum}
The morphism $X' \to X$ constructed above is a trivial Kan fibration. Thus $p'$ is the composite of two cartesian fibrations, and therefore a cartesian fibration.
\end{lem}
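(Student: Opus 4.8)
The plan is to recognize $X' \to X$ as a pullback of a manifestly trivial fibration, exploiting the fact that the defining square for an $n$-simplex of $X'$ is determined up to contractible choice by the data $(x(0), \sigma)$ together with the requirement that $x$ carries $s$-cartesian edges to $p$-cartesian edges. First I would set up the comparison map cleanly: an $n$-simplex of $X'$ is a square from $\mathcal{O}(\Delta^n)$ to $X$ over $\sigma : \Delta^n \to S$ sending $s$-cartesian edges to $p$-cartesian edges, and the functor $\alpha$ is restriction along the section $\Delta^n \to \mathcal{O}(\Delta^n)$ picking out the constant arrows (equivalently, evaluation at the source). So I want to show that, given an $n$-simplex $\xi$ of $X$ lying over $\sigma \in S_n$, together with a compatible boundary lift to $X'$, the space of extensions is contractible; this is precisely the statement that $X' \to X$ has the right lifting property against every $\partial \Delta^n \hookrightarrow \Delta^n$.

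The key step is a relative-cartesian-lifting argument. Write $q : \mathcal{O}(\Delta^n) \to \Delta^n$ for evaluation at $0$, a cartesian fibration (Ex.~\ref{exm:scocartfib}) whose cartesian edges are exactly those edges of $\mathcal{O}(\Delta^n)$ lying over an identity of $\Delta^n$ — i.e. ``vertical'' arrows in the arrow category. An $n$-simplex of $X'$ over $\sigma$ is then a functor $\mathcal{O}(\Delta^n) \to X$ over $\sigma$ sending $q$-cartesian edges to $p$-cartesian edges, and the value on the zero-section is the given $\xi$. Because $\mathcal{O}(\Delta^n)$ is the arrow category of $\Delta^n$, it is built from $\Delta^n$ by freely adjoining, over each object, a ``terminal-to-source'' arrow, and more precisely $\mathcal{O}(\Delta^n)$ is the total space of the cartesian fibration $q$; extending a functor from the zero-section of $q$ to all of $\mathcal{O}(\Delta^n)$ while sending $q$-cartesian edges to $p$-cartesian edges is exactly the data of choosing $p$-cartesian lifts, which HTT guarantees exists and is unique up to contractible choice once the target over $S$ is pinned down by $\sigma$. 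Concretely, I would invoke \cite[Pr.~3.1.2.1 or 4.3.2.15]{HTT}-style results on extending along a cartesian fibration: the restriction map
\begin{equation*}
\Fun^{\cart}_{\Delta^n}(\mathcal{O}(\Delta^n), X \times_S \Delta^n) \to \Fun_{\Delta^n}(\Delta^n, X \times_S \Delta^n)
\end{equation*}
from functors preserving cartesian edges to sections is a trivial fibration, because $\mathcal{O}(\Delta^n) \to \Delta^n$ is a cartesian fibration and $X \times_S \Delta^n \to \Delta^n$ is a cartesian fibration, and a cartesian-edge-preserving functor out of a cartesian fibration is determined up to contractible choice by its restriction to the ``initial part'' — here the zero-section, which meets every fiber. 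Assembling these fiberwise statements over the $n$-simplices of $S$, naturally in $[n] \in \Delta$, yields that $X' \to X$ is a trivial Kan fibration.

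The main obstacle I anticipate is the bookkeeping needed to phrase ``extension along $q$ preserving cartesian edges'' as a genuine trivial fibration rather than merely a surjective-on-$\pi_0$ statement: one must check both that lifts exist (solving the lifting problem against $\partial\Delta^n \hookrightarrow \Delta^n$) and that the space of lifts is contractible, and the cleanest route is to identify $X'$, relative to $X$, with a space of sections of a map that is itself assembled from cartesian fibrations, then cite the standard fact that such section spaces over a base that is an $\infty$-category with the relevant lifting are contractible. A secondary subtlety is verifying that the $s$-cartesian edges of $\mathcal{O}(\Delta^n)$ are precisely the fiberwise ones over $\Delta^n$ and that $\alpha$'s section hits every component of every fiber of $q$ — both are elementary but must be stated to make the lifting argument go through. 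Once Lemma \ref{olhum} is in hand, $p' = p \circ (\text{trivial fibration})$ composed with $s : \mathcal{O}(\Delta^n) \to \Delta^n$ at the level of the defining squares exhibits $p'$ as a composite of cartesian fibrations, hence cartesian, completing the proof.
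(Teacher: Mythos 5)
Your high-level strategy---extend a functor from the diagonal $\Delta^n \hookrightarrow \mathcal{O}(\Delta^n)$ to all of $\mathcal{O}(\Delta^n)$ by cartesian lifting along the $s$-cartesian edges, and argue the space of such extensions is contractible---is indeed the one the paper pursues, but two steps in your writeup go wrong. The first is a factual error: you assert that the $s$-cartesian edges of $\mathcal{O}(\Delta^n)$ are those lying over identities of $\Delta^n$, the ``vertical'' arrows. They are not. An edge $(i\to j)\to(k\to\ell)$ is $s$-cartesian precisely when its \emph{target} component is degenerate, $j=\ell$, whereas the edges lying over an identity via $s$ are those with $i=k$, i.e.\ the fiberwise edges, which are generically not $s$-cartesian. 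This is exactly why the paper marks ``those edges that map to degenerate edges under the target map $t$.'' Your later remark that ``the $s$-cartesian edges are precisely the fiberwise ones over $\Delta^n$'' is therefore also false, and since that identification underlies the reduction to a section-space argument, the error is not cosmetic.

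The second and more serious problem is that the load-bearing claim---that the restriction
\begin{equation*}
\Fun^{\cart}_{\Delta^n}(\mathcal{O}(\Delta^n), X\times_S\Delta^n)\to\Fun_{\Delta^n}(\Delta^n, X\times_S\Delta^n)
\end{equation*}
is a trivial fibration because a cartesian-edge-preserving functor out of a cartesian fibration is pinned down by its restriction to the ``initial part''---is not a citable theorem but is precisely the content of Lemma \ref{ojuny}, which the paper establishes by a nontrivial staircase induction. HTT Pr.~3.1.2.1 converts \emph{marked anodyne} inclusions into trivial fibrations of mapping spaces, so to invoke it one must first prove that $(\Delta^n)^\flat\hookrightarrow\mathcal{O}(\Delta^n)^\natural$ is (effectively) marked anodyne, which is the hard step rather than a hypothesis; HTT Pr.~4.3.2.15 concerns relative Kan extensions and applies only after an argument identifying cartesian-edge-preserving extensions with relative Kan extensions, which you do not supply. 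Finally, even granting the fiberwise contractibility, ``assembling over $[n]\in\Delta$'' does not solve the lifting problem against $\partial\Delta^n\hookrightarrow\Delta^n$: the lift must agree with the given cartesian-preserving data on all of $\partial\mathcal{O}(\Delta^n)$, not merely on the diagonal. This relative statement---that $\partial\mathcal{O}(\Delta^n)^\natural\cup^{(\partial\Delta^n)^\flat}(\Delta^n)^\flat\hookrightarrow\mathcal{O}(\Delta^n)^\natural$ is effectively anodyne---is what the paper proves by an additional induction on skeleta on top of Lemma \ref{ojuny}, and your proposal leaves that gap open.
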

\noindent Now to complete the proof of Pr. \ref{prp:doubledual}, it suffices to remark that $\fromto{X'}{X^{\vee\vee}}$ is manifestly a fiberwise equivalence.
\end{proof}

Let's now set about proving that $\fromto{X'}{X}$ is indeed a trivial fibration. For this, we will need to make systematic use of the cartesian model categories of marked simplicial sets as presented in \cite[\S 3.1]{HTT}.
\begin{proof}[Proof of Lm. \protect{\ref{olhum}}] We make $\mc{O}(\D^n)$ into a marked simplicial set $\mc{O}(\D^n)^\natural$ by marking those edges that map to degenerate edges under the target map $t : \mc{O}(\D^n) \to \D^n$. Furthermore, for any simplicial subset $K\subset\mc{O}(\D^n)$, let us write $K^{\natural}$ for the marked simplicial set $(K,E)$ in which $E\subset K_1$ is the set of edges that are marked as edges of $\mc{O}(\D^n)^\natural$.

Now write
\begin{equation*}
\del \mc{O}(\D^n)\coloneq\bigcup_{i=0}^n\mc{O}(\D^{\{0,\dots,\hat{i},\dots,n\}})\subset\mc{O}(\D^n),
\end{equation*}
which is a proper simplicial subset of $\Fun(\D^1,\del\D^n)$ when $n > 2$. Observe that $\del \mc{O}(\D^n)$ has the property that there is a bijection
\begin{equation*}
\Map(\del \mc{O}(\D^n),X)\cong\Map(\del \D^n,X').
\end{equation*}

Recasting the statement the Lemma in terms of lifting properties, we see that it will follow from the claim that for any $n \geq 0$ and any morphism $\fromto{\mc{O}(\D^n)^\natural}{S^{\sharp}}$ of marked simplicial sets, the natural inclusion
\[\iota_n: \del \mc{O}(\D^n)^\natural \cup^{(\del \D^n)^\flat} (\D^n)^\flat \to \mc{O}(\D^n)^\natural\]
is a trivial cofibration in the cartesian model structure for marked simplicial sets over $S$, where the $\del \D^n$ in $\del \mc{O}(\D^n)$ is the boundary of the ``long $n$-simplex'' whose vertices are the identity edges in $\D^n$.

In fact, we will prove slightly more. Let $\mathcal{I}$ denote the smallest class of monomorphisms of marked simplicial sets that contains the marked anodyne morphisms and satisfies the two-out-of-three axiom. We call these morphisms \emph{effectively anodyne} maps of marked simplicial sets. Clearly, for any morphism $\fromto{Y}{S^{\sharp}}$, an effectively anodyne morphism $\fromto{X}{Y}$ is a trivial cofibration in the cartesian model structure on marked simplicial sets over $S$.

It's clear that $\iota_1$ is marked anodyne, because it's isomorphic to the inclusion 
\[\into{(\Delta^{\{0, 2\}})^\flat}{(\Delta^2)^\flat \cup^{(\D^{\{1, 2\}})^\flat} (\D^{\{1, 2\}})^\sharp}.\]
Our claim for $n>1$ will in turn follow from the following sublemma.
\begin{lem} \label{ojuny}
The inclusion $\into{(\D^n)^\flat}{\mc{O}(\D^n)^\natural}$ of the ``long $n$-simplex'' is effectively anodyne.
\end{lem}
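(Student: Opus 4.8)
The plan is to prove the stronger statement that the inclusion is \emph{marked} anodyne, by recognizing it as a strong deformation retract through marked edges.

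First I would fix coordinates: $\mathcal{O}(\Delta^n)=\Fun(\Delta^1,\Delta^n)$ is the nerve of the poset $P$ of pairs $(i,j)$ with $0\le i\le j\le n$, ordered componentwise, and an edge becomes degenerate under the target map $t\colon\mathcal{O}(\Delta^n)\longrightarrow\Delta^n$ precisely when its second coordinate is constant; so the marking of $\mathcal{O}(\Delta^n)^\natural$ consists of the ``horizontal'' edges $(i,j)\rightarrow(i',j)$. The long $n$-simplex is the nerve of the diagonal subposet $L=\{(i,i)\}\cong[n]$, whose nondegenerate edges have strictly increasing second coordinate; hence the marking $L$ inherits is trivial, and the map at hand is the inclusion $\iota\colon(\Delta^n)^\flat=L^\flat\hookrightarrow\mathcal{O}(\Delta^n)^\natural$.

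Next I would introduce the functor $\rho\colon P\longrightarrow P$, $(i,j)\mapsto(j,j)$. It factors through the diagonal, so $\rho=\iota r$ where $r\colon\mathcal{O}(\Delta^n)^\natural\longrightarrow L^\flat$ is the map induced by $t$ (a map of marked simplicial sets, as it sends horizontal edges to degenerate ones) and $r\iota=\mathrm{id}$. The inequalities $(i,j)\le(j,j)$, valid since $i\le j$, define a natural transformation $\eta\colon\mathrm{id}_P\Rightarrow\rho$, i.e.\ a poset map $P\times[1]\to P$. The crucial observation is that on nerves this upgrades to a map of \emph{marked} simplicial sets
\[\eta\colon\mathcal{O}(\Delta^n)^\natural\times(\Delta^1)^\sharp\longrightarrow\mathcal{O}(\Delta^n)^\natural,\]
and this is where the choice of $\rho$ is forced: a marked edge of the source is a pair consisting of a horizontal edge $e$ and an arbitrary edge of $\Delta^1$, and $\eta$ sends it to an edge of the naturality square of $\eta$ along $e$, each of whose edges again has constant (or degenerate) second coordinate; the symmetric retraction $(i,j)\mapsto(i,i)$ would instead produce the structure edges $(i,i)\rightarrow(i,j)$, which are \emph{not} marked. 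Moreover $\eta$ is constant along $L$: both $\eta\circ(\iota\times\mathrm{id}_{\Delta^1})$ and $\iota\circ\mathrm{pr}_{L}$ carry $(i,\varepsilon)$ to $(i,i)$. I expect verifying this paragraph to be the main obstacle; everything else is formal.

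Finally I would conclude. The pushout--product of the monomorphism $\iota$ with the marked anodyne inclusion $\{1\}^\flat\hookrightarrow(\Delta^1)^\sharp$ (marked anodyne by \cite[3.1.1.1]{HTT}, encoding the cartesian lifting property) is the marked anodyne map
\[i_K\colon K\coloneq\bigl(\mathcal{O}(\Delta^n)^\natural\times\{1\}\bigr)\cup^{L^\flat\times\{1\}}\bigl(L^\flat\times(\Delta^1)^\sharp\bigr)\hookrightarrow\mathcal{O}(\Delta^n)^\natural\times(\Delta^1)^\sharp,\]
by closure of marked anodyne maps under pushout--product with monomorphisms \cite[\S3.1]{HTT}. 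The data above exhibit $\iota$ as a retract of $i_K$: the left square of the retract diagram is formed by the inclusions at time $0$ of $L^\flat$ into $K$ and of $\mathcal{O}(\Delta^n)^\natural$ into $\mathcal{O}(\Delta^n)^\natural\times(\Delta^1)^\sharp$ (and $\eta$ restricts to the identity at time $0$); the right square is formed by $\eta$ together with the retraction $b\colon K\longrightarrow L^\flat$ that is $r$ on $\mathcal{O}(\Delta^n)^\natural\times\{1\}$ and $\mathrm{pr}_L$ on $L^\flat\times(\Delta^1)^\sharp$ (these agree on the overlap since $r\iota=\mathrm{id}$), using $\eta|_{\{1\}}=\iota r$ and the constancy of $\eta$ along $L$. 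Since marked anodyne maps are closed under retracts, $\iota$ is marked anodyne, hence effectively anodyne.
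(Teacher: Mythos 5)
Your proof is correct, and it in fact establishes the strictly stronger conclusion that the inclusion is marked anodyne (the paper's argument only yields effective anodyneness, since its base case $k=0$ invokes the two-out-of-three degree of freedom). The key computations check out: $\rho(i,j)=(j,j)$ is a poset retraction of $P=\{(i,j):0\le i\le j\le n\}$ onto the diagonal $L$; the natural transformation $\eta\colon\mathrm{id}\Rightarrow\rho$ gives a poset map $P\times[1]\to P$ whose nerve respects the markings, since a marked edge $((i,j),\varepsilon)\to((i',j),\varepsilon')$ of $\mathcal{O}(\Delta^n)^\natural\times(\Delta^1)^\sharp$ goes to one of $(i,j)\to(i',j)$, $(i,j)\to(j,j)$, or $(j,j)\to(j,j)$, all with constant second coordinate; and $\eta$ is constant along $L$ because $\rho$ fixes $L$ pointwise. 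The retract diagram exhibiting $\iota$ as a retract of the pushout-product $\iota\mathbin{\hat\times}(\{1\}^\flat\hookrightarrow(\Delta^1)^\sharp)$ then closes the argument, since marked anodynes form a weakly saturated class.

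This is a genuinely different and much more economical route than the one in the paper. The paper filters $\mathcal{O}(\Delta^n)$ by the ``area'' $A(x)$ of its nondegenerate $2n$-simplices, identifies, for each such $x$, the intersection $x\cap\mathcal{O}_{k-1}(\Delta^n)$ as a generalized horn by analyzing the ``flipvertices'' of $x$, and appeals to the generalized-horn filling lemma \cite[Lm.~12.13]{mack1}, with a separate spine argument for the exceptional case $k=0$. Your proof replaces all of this combinatorics with the single observation that the target map $t$ induces a marked deformation retraction of $\mathcal{O}(\Delta^n)^\natural$ onto the long simplex, with the direction of the retraction $(i,j)\mapsto(j,j)$ forced by which edges are marked. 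The paper's cell-by-cell decomposition gives finer control over the filtration, but none of that extra information is used in the proof of Lemma~\ref{olhum}, so the retract argument is a clean replacement.
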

\noindent Let's assume the veracity of this lemma for now, and let's complete the proof of Lm. \ref{olhum}. It's enough to show that the inclusion
\[\into{(\D^n)^\flat}{\del \mc{O}(\D^n)^\natural \cup^{(\del \D^n)^\flat} (\D^n)^\flat}\]
is effectively anodyne, for then $\iota_n$ will be a effectively anodyne by the two-out-of three property. We'll deploy induction and assume that Lemma \ref{olhum} has been proven for each $l < n$. 
 Now for each $l$, let
\[\widetilde{\text{sk}}_l \mc{O} (\D^n)^\natural := \colim_{I \subseteq n, |I| \leq l} \mc{O}(\D^I)^\natural \]
so that
\[\tsk_{n - 1} \mc{O}(\D^n)^\natural = \del \mc{O}(\D^n)^\natural.\]
By Lemma \ref{olhum} for $\iota_l$, we have that
\[\tsk_{l - 1} \mc{O}(\D^n)^\natural \cup^{(\sk_{l -1} \D^n)^\flat} (\D^n)^\flat \to \tsk_l \mc{O}(\D^n)^\natural \cup^{(\sk_l \D^n)^\flat} (\D^n)^\flat\]
is a trivial cofibration, because it's a composition of pushouts along maps isomorphic to $\iota_l$. Since
\[\tsk_0 \mc{O}(\D^n)^\natural \cup^{(\sk_0 \D^n)^\flat} (\D^n)^\flat = (\D^n)^\flat,\]
 iterating this up to $l = n -1$ gives the result.
\end{proof}

\begin{proof}[Proof of Lm. \protect{\ref{ojuny}}] 
Write $S$ for the set of nondegenerate $(2n)$-simplices
\[x=\left[00=i_0j_0\to i_1j_1\to\cdots\to i_{2n}j_{2n}=nn\right]\]
of $\mc{O}(\D^n)$. For $x \in S$ as above, define
\[A(x) = \frac 1 2 \left(-n+\sum_{r=0}^{2n} (j_r-i_r)\right).\]
Drawing $\mc{O}(\D^n)$ as a staircase-like diagram and $x$ as a path therein, it's easily checked that $A(x)$ is the number of squares enclosed between $x$ and the ``stairs'' given by the simplex
\[x_0 = \left[00 \to 01 \to 11 \to 12 \to \cdots \to (n-1)n \to nn\right].\]
We'll fill in the simplices of $S$ by induction on $A(x)$. For $k\geq 0$, let 
\[S_k = \{x \in S \, | \, A(x) = k\}\textrm{\quad and\quad}T_k = \{x \in S \, | \, A(x) \leq k\}\]
and
\[\mc{O}_k (\D^n) := \bigcup_{x\in T_k} x\subset\mc{O}(\D^n).\]
We make the convention that
\[\mc{O}_{-1}(\D^n) := \D^n.\]
We must now show that for all $k$ with $0 \leq k \leq \frac 1 2 n (n -1)$, the inclusion
\begin{equation*}
\into{\mc{O}_{k - 1}(\D^n)^\natural}{\mc{O}_k(\D^n)^\natural}
\end{equation*}
is marked anodyne, and for each $k$ it will be a matter of determining $x \cap \mc{O}_{k - 1}(\D^n)$ for each $x \in S_k$ and showing that the inclusion
\[\into{x^{\natural} \cap \mc{O}_{k - 1}(\D^n)^\natural}{x^{\natural}}\]
is effectively anodyne.

The case $k = 0$ is exceptional, so let's do it first. The set $S_0$ has only one element, the simplex
\[x_0 = \left[00 \to 01 \to 11 \to 12 \to \cdots \to (n-1)n \to nn\right].\]
We claim that the inclusion of $\into{\mc{O}_{-1}(\D^n)^{\natural}}{x_0^{\natural}}$ is effectively anodyne. Sticking all the marked 2-simplices of the form
\[\left[ii \to i(i+1) \to (i + 1)(i + 1)\right]^{\natural}\]
onto $\mc{O}_{-1}(\D^n)^{\natural}$ is a marked anodyne operation, so let's do that and call the result $y$. Clearly the spine of $x_0$ is inner anodyne in $y$, so the inclusion $\into{y}{x_0}$ is a trivial cofibration. This proves the claim.

Now we suppose $k>0$, and suppose
\[x=\left[00=i_0j_0\to i_1j_1\to\cdots\to i_{2n}j_{2n}=nn\right]\in S_k.\]
We call a vertex $v=(i_rj_r)$ of $x$ a \emph{flipvertex} if it satisfies the following conditions:
\begin{itemize}
\item $0<r<2n$;
\item $j_r>i_r$;
\item $i_{r-1}=i_r$ (and hence $j_{r-1}=j_r-1$);
\item $j_{r+1}=j_{r}$ (and hence $i_{r+1}=i_r+1$).
\end{itemize}
Observe that $x$ must contain some flipvertices, and it is uniquely determined by them. Note also that if $y$ is an arbitary simplex of $\mc{O}(\D^n)$ containing all the flipvertices of $x$, and if $z \in S$ contains $y$ as a subsimplex, then $A(z) \geq A(x)$, with equality if and only if $z = x$.

We define the \emph{flip of $x$ at $v$} $\Phi(x, v)$ as the modification of $x$ in which the sequence
\[\cdots\to i_r(j_r - 1) \to i_rj_r \to (i_r+1)j_r\to\cdots\]
has been replaced by the sequence
\[\cdots\to i_r(j_r-1) \to (i_r + 1)(j_r - 1) \to (i_r + 1)j_r\to\cdots.\]
Then $\Phi(x, v) \in S_{k - 1}$, so we have $\Phi(x, v) \subset \mc{O}_{k - 1}(\D^n).$ We have therefore established that $x \cap \mc{O}_{k - 1}(\D^n)$ is the union of the faces
\[\del_v x = x \cap \Phi(x, v)\]
as $v$ ranges over flipvertices of $x$. Equivalently, if $\{v_1, \cdots, v_m\}$ is the set of flipvertices of $x$, then $x \cap \mc{O}_{k - 1}(\D^n)$ is the generalized horn
\[x \cap \mc{O}_{k - 1}(\D^n)\cong\Lambda^{2n}_{\{0, \cdots, 2n\} \setminus \{v_1, \cdots, v_m\}}\subset\Delta^{2n}\cong x\]
in the sense of \cite[Nt. 12.6]{mack1}.

If $m > 1$, since flipvertices cannot be adjacent, it follows that the set
\[\{0, \cdots, 2n\} \setminus \{v_1, \cdots, v_m\}\]
satisfies the hypothesis of \cite[Lm. 12.13]{mack1}, and so the inclusion $\into{x \cap \mc{O}_{k -1}(\D^n)}{x}$ is inner anodyne, whence $\into{x^{\natural} \cap \mc{O}_{k -1}(\D^n)^\natural}{x^\natural}$ is effectively anodyne.

On the other hand, if $m=1$, then $x \cap \mc{O}_{k - 1}(\D^n)$ is a face:
\[x \cap \mc{O}_{k - 1}(\D^n)=\del_vx\cong\Delta^{\{0, \dots, \widehat{i+j},\dots, 2n\}}\subset\Delta^{2n}\cong x,\]
where $v=(ij)$ is the unique flipvertex of $x$. We must show that the inclusion
\[\into{x^{\natural} \cap \mc{O}_{k - 1}(\D^n)^{\natural}}{x^{\natural}}\]
is effectively anodyne. We denote by $y$ the union of $\del_v x$ with the 2-simplex
\[[ i(j - 1) \to ij \to (i+1)j].\]
The inclusion $\into{\del_v x^{\natural}}{y^{\natural}}$ is marked anodyne; we claim that the inclusion $\into{y}{x}$ is inner anodyne.

Indeed, something more general is true: suppose $s$ is an inner vertex of $\D^m$ and $F$ is a subset of $[m]$ which has $s$ as an inner vertex and is \emph{contiguous}, meaning that if $t_1, t_2 \in F$ and $t_1 < u < t_2$ then $u \in F$. Then the inclusion $\del_s \D^m \cup \D^F \to \D^m$ is inner anodyne.

We prove this by induction on $m - |F|$. If $|F| = m$, then $\D^F = \D^m$ and the claim is vacuous. Otherwise, let $F'$ be a contiguous subset of $[n]$ containing $F$ with $|F'| = |F| + 1$. Then 
\[\D^{F'} \cap (\D^F \cup \del_s \D^m) = \D^F \cup \del_s \D^{F'}.\]
But $\D^F \cup \del_s \D^{F'}$ is the generalized horn $\Lambda^{F'}_{F \setminus \{s\}}$, and $F \setminus \{s\}$ satisfies the hypothesis of \cite[Lm. 12.13]{mack1} as a subset of $F'$ since $s$ was already an inner vertex of $F$. Thus $\del_s \D^n \cup \D^F \to \del_s \D^n \cup \D^{F'}$ is inner anodyne, and by the induction hypothesis, we are done.
\end{proof}


\section{The duality pairing}\label{reltwarr} In this section we give construct a pairing that concretely exhibits the equivalence between the functor $\YY:\fromto{T}{\Cat_{\infty}}$ that classifies a cocartesian fibration $q:\fromto{Y}{T}$ and the opposite of the functor that classifies the cocartesian fibration $(q^{\vee})^{\op}$.

The way we'll go about this is the following: we will construct a left fibration
\begin{equation*}
M:\fromto{\widetilde{\mathcal{O}}(Y/T)}{(Y^{\vee})^{\op}\times_T Y}
\end{equation*}
such that for any object $t\in T$, the pulled back fibration
\begin{equation*}
\fromto{\widetilde{\mathcal{O}}(Y/T)_t}{((Y^{\vee})^{\op})_t\times Y_t\simeq Y_t^{\op}\times Y_t}
\end{equation*}
is a \textbf{\emph{perfect pairing}}; i.e., it satisfies the conditions of the following result of Lurie.

\begin{prp}[\protect{\cite[Cor. 4.2.14]{DAGX}}]\label{lem:twarrrecog} Suppose $\sigma:\fromto{X}{A}$ and $\tau:\fromto{X}{B}$ two functors that together define a left fibration $\lambda:\fromto{X}{A\times B}$. Then $\lambda$ is equivalent to a fibration of the form $\fromto{\widetilde{\mathcal{O}}(C)}{C^{\op}\times C}$ (and in particular $A\simeq B^{\op}$) just in case the following conditions are satisfied.
\begin{enumerate}[(\ref{lem:twarrrecog}.1)]
\item For any object $a\in A$, there exists an initial object in the $\infty$-category $X_a\coloneq \sigma^{-1}(\{a\})$.
\item For any object $b\in B$, there exists an initial object in the $\infty$-category $X_b\coloneq \tau^{-1}(\{b\})$.
\item An object $x\in X$ is initial in $X_{\sigma(x)}$ just in case it is initial in $X_{\tau(x)}$.
\end{enumerate}
\end{prp}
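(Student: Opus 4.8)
The plan is to invoke Lurie's characterization of twisted arrow left fibrations directly, since the statement is quoted verbatim from \cite[Cor. 4.2.14]{DAGX}; but since the excerpt presents it as a \texttt{prp} to be proved, I would reconstruct the argument as follows. The necessity of conditions (\ref{lem:twarrrecog}.1)--(\ref{lem:twarrrecog}.3) is the easy direction: if $\lambda$ is equivalent to $\fromto{\widetilde{\mathcal{O}}(C)}{C^{\op}\times C}$, then for a fixed object $a = c \in C^{\op}$ the fiber $X_a$ is the $\infty$-category of arrows in $C$ with source $c$, i.e. $C_{c/}$, which has the identity $\id_c$ as an initial object; symmetrically the fiber over $b = c' \in C$ is $C_{/c'}$, with initial object $\id_{c'}$; and an arrow $f : c \to c'$, viewed as an object of $\widetilde{\mathcal{O}}(C)$, is initial in $C_{c/}$ iff $f$ is an equivalence iff $f$ is initial in $C_{/c'}$, which gives (\ref{lem:twarrrecog}.3). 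The content is the converse.

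For the converse, suppose (\ref{lem:twarrrecog}.1)--(\ref{lem:twarrrecog}.3) hold. First I would use (\ref{lem:twarrrecog}.1): choosing an initial object in each fiber $X_a$ defines a section $s : A \to X$ of $\sigma$ (up to contractible choice, using that $\lambda$, hence $\sigma$, is a categorical fibration and that a fiberwise-initial section can be assembled — this is where one invokes that the space of initial objects of each fiber is contractible and applies a relative lifting/straightening argument). Composing with $\tau$ gives a functor $\tau \circ s : A \to B$. I would then argue this functor is an equivalence $A \simeq B^{\op}$: condition (\ref{lem:twarrrecog}.3) guarantees that $s(a)$ is also $\tau$-fiberwise initial, so running the same construction on the $B$-side produces a section $s' : B \to X$, and (\ref{lem:twarrrecog}.2)--(\ref{lem:twarrrecog}.3) make $\sigma \circ s'$ an inverse up to homotopy. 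Set $C := A \simeq B^{\op}$. The desired equivalence $X \simeq \widetilde{\mathcal{O}}(C)$ over $C^{\op} \times C$ is then obtained by comparing classifying functors: unstraightening $\lambda$ over $A \times B \simeq C^{\op} \times C$ gives a functor $C^{\op} \times C \to \Top$, and one shows it agrees with $\Map_C(-, -)$ by checking that both send $(c, c')$ to the mapping space and that the section $s$ picks out the "identity" point, i.e. the unit of the Yoneda pairing — the hypotheses say precisely that the left fibration $\lambda$ has a distinguished fiberwise-initial object on both sides, which is the universal property characterizing $\Map_C$.

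The main obstacle is the middle step: promoting the pointwise choices of initial objects to an actual functor (section) $s : A \to X$ and verifying it is well-defined up to coherent homotopy, and then identifying the resulting straightened functor with the hom-functor of $C$. Concretely, one needs that "being an initial object of the fiber" is detected by a relative left-fibration condition so that $s$ exists and is essentially unique; this is handled by the observation that the full subcategory of $X$ spanned by the fiberwise-initial objects maps by a trivial fibration to $A$ (each fiber being a contractible Kan complex, as the space of initial objects in an $\infty$-category is either empty or contractible), so a section exists and any two are equivalent. Once $s$ is in hand, the comparison with $\widetilde{\mathcal{O}}(C)$ is formal: both left fibrations over $C^{\op}\times C$ are classified by functors that corepresent, in each variable, the fiber-inclusion of the point $s$, and a map of left fibrations inducing an equivalence on this universal object is an equivalence. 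I would cite \cite[\S 4.2]{DAGX} for the details of this last identification rather than reproduce it, since in this note the proposition is used as a black box to verify that the relative twisted arrow construction of \S\ref{reltwarr} is a perfect pairing.
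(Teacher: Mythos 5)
The paper does not prove this proposition: it is attributed to Lurie (\cite[Cor. 4.2.14]{DAGX}) and treated as a black box in \S\ref{reltwarr} to recognize the duality pairing, so there is no proof in the source to compare against --- a fact you correctly flagged. Your reconstruction is a reasonable sketch of the content of \cite[\S 4.2]{DAGX}: the forward direction by inspecting the fibers of $\fromto{\widetilde{\mathcal{O}}(C)}{C^{\op}\times C}$ (over $\{c\}\times C$ one finds $C_{c/}$, over $C^{\op}\times\{c'\}$ one finds $C_{/c'}$, with identity arrows initial, and initiality on either side detected by invertibility of the arrow); the converse by passing to the full subcategory of $\sigma$-fiberwise initial objects, observing that it maps to $A$ by a trivial fibration because the space of initial objects of an $\infty$-category is empty or contractible, choosing a section $s$, and using condition (\ref{lem:twarrrecog}.3) to see $s$ is simultaneously $\tau$-fiberwise initial. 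One point worth care: you assert that $\tau\circ s:\fromto{A}{B}$ ``is an equivalence $A\simeq B^{\op}$,'' but $\tau\circ s$ has target $B$, not $B^{\op}$; the precise statement is that the section data together with the left-fibration structure on $\lambda$ exhibits an antiequivalence, and extracting $C$ and matching $\lambda$ with $\fromto{\widetilde{\mathcal{O}}(C)}{C^{\op}\times C}$ is the substantive part of Lurie's argument. Since you appropriately defer to \cite[\S 4.2]{DAGX} for that identification --- exactly as the paper itself does --- there is no discrepancy to flag.
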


In our case, the functor that classifies $M$ will be the \textbf{\emph{fiberwise mapping space}} functor
\begin{equation*}
\Map_{Y/T}:\fromto{(Y^{\vee})^{\op}\times_T Y}{\Top}.
\end{equation*}
This functor carries an object $(x,y)\in (Y^{\vee})^{\op}\times_T Y$ to the space $\Map_{\YY(t)}(x,y)$, where $t=q(x)=q(y)$. If $\phi:\fromto{s}{t}$ is a morphism of $S$, then a morphism
\begin{equation*}
(f,g):\fromto{(u,v)}{(x,y)}
\end{equation*}
of $(Y^{\vee})^{\op}\times_T Y$ covering $\phi$ is given, in effect, by morphisms $f:\fromto{x}{\YY(\phi)(u)}$ and $g:\fromto{\YY(\phi)(v)}{y}$ of $\YY(s)$. The functor $\Map_{Y/T}$ will then carry $(f,g)$ to the morphism
\begin{equation*}
\Map_{\YY(s)}(u,v)\ \tikz[baseline]\draw[>=stealth,->,font=\scriptsize](0,0.5ex)--node[above]{$\YY(\phi)$}(0.75,0.5ex);\ \Map_{\YY(t)}(\YY(\phi)(u),\YY(\phi)(v))\ \tikz[baseline]\draw[>=stealth,->,font=\scriptsize](0,0.5ex)--node[above]{$g\circ-\circ f$}(1.25,0.5ex);\ \Map_{\YY(t)}(x,y).
\end{equation*}

\begin{nul}\label{nul:OtildeoverT} Before we proceed headlong into the details of the construction, let us first give an informal but very concrete description of both $\widetilde{\mathcal{O}}(Y/T)$ and $M$. The objects of $\widetilde{\mathcal{O}}(Y/T)$ will be morphisms $f:\fromto{u}{v}$ of $Y$ such that $q(f)$ is an identity morphism in $T$. Now a morphism $\fromto{f}{g}$ from an arrow $f:\fromto{u}{v}$ to an arrow $g:\fromto{x}{y}$ is a commutative diagram
\begin{equation*}
\begin{tikzpicture}[baseline]
\matrix(m)[matrix of math nodes, 
row sep=3ex, column sep=4ex, 
text height=1.5ex, text depth=0.25ex] 
{u&&x\\[-4ex]
&w&\\ 
v&&y\\}; 
\path[>=stealth,->,font=\scriptsize] 
(m-1-3) edge node[above]{$\psi$} (m-2-2)
edge node[right]{$g$} (m-3-3)
(m-1-1) edge node[above]{$\phi$} (m-2-2) 
edge node[left]{$f$} (m-3-1) 
(m-2-2) edge (m-3-3) 
(m-3-1) edge node[below]{$\xi$} (m-3-3); 
\end{tikzpicture}
\end{equation*}
in which $\phi$ is $q$-cocartesian, $q(\psi)$ is an identity morphism. Composition is performed by forming suitable pushouts on the source side and simple composition on the target side. We will establish below that there is indeed an $\infty$-category that admits this description.

The functor $M$ will carry an object $f\in\widetilde{\mathcal{O}}(Y/T)$ as above to the pair of objects $(u,v)\in(Y^{\vee})^{\op}\times Y$, and it will carry a morphism $\fromto{f}{g}$ as above to the pair of morphisms
\begin{equation*}
\left(\begin{tikzpicture}[baseline]
\matrix(m)[matrix of math nodes, 
row sep=2ex, column sep=3ex, 
text height=1.5ex, text depth=0.25ex] 
{&w&\\ 
u&&x\\}; 
\path[>=stealth,<-,inner sep=0.9pt,font=\scriptsize] 
(m-1-2) edge node[above left]{$\phi$} (m-2-1) 
edge node[above right]{$\psi$} (m-2-3); 
\end{tikzpicture}
,\ v\ \tikz[baseline]\draw[>=stealth,->,font=\scriptsize](0,0.5ex)--node[above]{$\xi$}(0.5,0.5ex);\ y\right)\in(Y^{\vee})^{\op}\times Y.
\end{equation*}

We call $M$ the \textbf{\emph{duality pairing}} for $q$. We will prove below that it is left fibration, whence it follows readily from this description that the functor that classifies it is indeed be the fiberwise mapping space functor
\begin{equation*}
\Map_{Y/T}:\fromto{(Y^{\vee})^{\op}\times_T Y}{\Top}
\end{equation*}
defined above.
\end{nul}

\begin{prp}\label{prp:Otildeconstruct} Both an $\infty$-category $\widetilde{\mathcal{O}}(Y/T)$ and a left fibration $M$ as described above exist.
\end{prp}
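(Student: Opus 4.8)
The plan is to realize $\widetilde{\mathcal{O}}(Y/T)$ as another relative effective Burnside construction, so that the existence of the $\infty$-category and the left fibration property of $M$ both drop out of the unfurling machinery of \cite{mack1} exactly as in \S\ref{sect:dualdfn}. Concretely, the diagram defining a morphism $\fromto{f}{g}$ in \ref{nul:OtildeoverT} is an ambidextrous span: on the source side it is a cocartesian-edge-plus-degenerate-base span (the defining data of a morphism in $(Y^{\vee})^{\op}$), and on the target side it is merely a morphism of $Y$. I would therefore set $\widetilde{\mathcal{O}}(Y/T)$ to be the subcategory of $A^{\eff}(C,C_{\dag},C^{\dag})$-type object built from the twisted arrow $\infty$-category: an $n$-simplex is a functor $\fromto{\widetilde{\mathcal{O}}(\Delta^n)^{\op}\times\Delta^1}{Y}$, constant in $T$ along the $\widetilde{\mathcal{O}}(\Delta^n)^{\op}$-direction of the source vertex and equal to a fixed $n$-simplex of $T$ along the rest, whose $\widetilde{\mathcal{O}}(\Delta^n)^{\op}$-edges over the source are $q$-cocartesian (with pullback/Burnside squares) and whose $\Delta^1$-edges are arbitrary. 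The two projections to $(Y^{\vee})^{\op}$ and to $Y$ are induced by the two endpoint inclusions $\into{\Delta^0}{\Delta^1}$ together with the projection $\fromto{\widetilde{\mathcal{O}}(\Delta^n)^{\op}}{(\Delta^n)^{\op}}$, and together they give $M$.

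The key steps, in order: (1) make the informal description of \ref{nul:OtildeoverT} precise by writing down the simplicial set $\widetilde{\mathcal{O}}(Y/T)$ as above, and check it agrees on objects and morphisms with the stated description (composition being source-side pushout, target-side composition). (2) Identify this simplicial set with the unfurling $\Upsilon$ of an appropriate adequate inner fibration over an adequate triple — the relevant triple packages $Y$ over $T$ together with the $q$-cocartesian edges as the "egressive" morphisms and an extra $\Delta^1$-worth of free morphisms; the point is that the source leg is exactly the $(Y^{\vee})^{\op}$-construction and the target leg is the trivial (identity-marked) one, so the hypotheses of \cite[Df. 10.3, 11.3]{mack1} are met for the same reasons as in the earlier sections. (3) Invoke \cite[Lm. 11.4 and Lm. 11.5]{mack1} to conclude that $\widetilde{\mathcal{O}}(Y/T)$ is an $\infty$-category and that $M$ is an inner fibration; then upgrade "inner fibration" to "left fibration" by checking that every edge of $\widetilde{\mathcal{O}}(Y/T)$ whose image under $M$ is degenerate — equivalently (by the description in \ref{nul:OtildeoverT}) whose $\phi$ and $\psi$ are equivalences and $\xi$ is an equivalence — is itself an equivalence, which is immediate, and that the relevant lifts against $\Lambda^n_0\hookrightarrow\Delta^n$ exist. (4) Finally, read off that the fibers are $\widetilde{\mathcal{O}}(Y_t)\fromto{}{Y_t^{\op}\times Y_t}$ (using \ref{nul:simplicesofdual} for the identification $((Y^{\vee})^{\op})_t\simeq Y_t^{\op}$), so $M$ restricts fiberwise to a genuine twisted arrow fibration, consistent with \ref{prp:twarrisinfincat}.

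The main obstacle I expect is step (3): promoting $M$ from an inner fibration to a \emph{left} fibration. The unfurling lemmas of \cite{mack1} are tailored to produce \emph{(co)cartesian} fibrations over the Burnside base, and here $M$ maps to a product $(Y^{\vee})^{\op}\times_T Y$ rather than to a single Burnside $\infty$-category, so one cannot quote \cite[Lm. 11.4]{mack1} verbatim. I would handle this by factoring $M$: the first leg $\fromto{\widetilde{\mathcal{O}}(Y/T)}{(Y^{\vee})^{\op}}$ is (a pullback of) an unfurling and hence a cocartesian fibration whose cocartesian edges are those with $\xi$ invertible, while the composite with $\fromto{(Y^{\vee})^{\op}}{T}$ controls the base; one then shows directly that a morphism of $\widetilde{\mathcal{O}}(Y/T)$ is $M$-cocartesian for every morphism of the target (because the target leg imposes no cocartesianness condition), which forces $M$ to be a left fibration by the usual criterion that a cocartesian fibration all of whose morphisms are cocartesian is a left fibration. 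Verifying this last point carefully — that arbitrary $\xi$ combined with $q$-cocartesian $\phi$ yields an $M$-cocartesian edge — is the genuinely technical part, but it is of exactly the same flavor as the fiberwise computations already carried out in \ref{nul:simplicesofdual}.
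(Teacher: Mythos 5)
Your high-level intuition — that $\widetilde{\mathcal{O}}(Y/T)$ should be constructed by the effective Burnside machinery of \cite{mack1} and that the genuine difficulty is upgrading $M$ to a \emph{left} fibration — is correct, and you are right to flag that \cite[Lm.\ 11.4 and Lm.\ 11.5]{mack1} do not directly apply to a map with a product as target. But the concrete route you propose does not match the paper's construction, and in the places where you are vague, the gaps are substantive.

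First, the paper does not build $\widetilde{\mathcal{O}}(Y/T)$ directly from your ad hoc shape $\widetilde{\mathcal{O}}(\Delta^n)^{\op}\times\Delta^1$. Instead it starts from the arrow $\infty$-category $\mathcal{O}(Y)$, equips it with an adequate-triple structure by declaring two subcategories $\mathcal{O}(Y)_{\dag}$ and $\mathcal{O}(Y)^{\dag}$ (the ingressives have $q$-cocartesian source leg and no condition on target; the egressives have source leg over an equivalence and invertible target leg), and forms the effective Burnside $\infty$-category $\widehat{\widetilde{\mathcal{O}}'(Y)}=A^{\eff}(\mathcal{O}(Y)^{\op},(\mathcal{O}(Y)_{\dag})^{\op},(\mathcal{O}(Y)^{\dag})^{\op})$. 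This is \emph{not} the unfurling of some inner fibration — it is a Burnside category of a triple — so your step (2), "identify this simplicial set with the unfurling $\Upsilon$," is asserting an identification that isn't established and isn't what the construction actually is. The desired $\widetilde{\mathcal{O}}(Y/T)$ is then obtained from $\widehat{\widetilde{\mathcal{O}}'(Y)}$ by two explicit pullbacks that force the various equivalences in the morphism description to be identities and the base arrow to be degenerate; this is what makes the final object match your informal description from \ref{nul:OtildeoverT}.

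Second, for the left-fibration step, the paper does not factor $M$ through $(Y^{\vee})^{\op}$ and argue leg by leg. It cites the \emph{omnibus theorem} \cite[Th.\ 12.2]{mack1} — a genuinely stronger tool than the unfurling lemmas — which applies to a compatible diagram of Burnside categories and directly yields that the natural map $\widehat{M}'$ to the fiber product $\bigl(\widehat{(Y^{\vee})^{\op}}\times\widehat{Y}\bigr)\times_{\widehat{T}\times\widehat{T}}\widehat{\mathcal{O}(T)}$ is an inner fibration with the property that \emph{every} morphism of $\widehat{\widetilde{\mathcal{O}}'(Y)}$ is $\widehat{M}'$-cocartesian; adding the right lifting property against $\{0\}\hookrightarrow\Delta^1$ then shows $\widehat{M}'$ is a left fibration, after which the two pullbacks preserve that property for free. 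Your "cocartesian fibration all of whose morphisms are cocartesian is a left fibration" criterion is the same underlying idea, but you have not explained how to verify its hypotheses for a map whose target is not a single Burnside base, and Lm.\ 11.4/11.5 alone will not do that for you — you would have to reprove a substantial chunk of \cite[Th.\ 12.2]{mack1} by hand. So the proposal as written has a real gap precisely at the step you yourself identify as "the genuinely technical part."
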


We postpone the precise construction of $\widetilde{\mathcal{O}}(Y/T)$ and $M$ till the end of this section (Constr. \ref{constr:buildotilde}). Our concrete description suffices to deduce the main result of this section.
\begin{thm} For any object $t\in T$, the left fibration
\begin{equation*}
\fromto{\widetilde{\mathcal{O}}(Y/T)_t}{((Y^{\vee})^{\op})_t\times Y_t}
\end{equation*}
pulled back from the duality pairing $M$ is a perfect pairing; i.e., it satisfies the conditions of Pr. \ref{lem:twarrrecog}.
\begin{proof} Suppose $x\in((Y^{\vee})^{\op})_t$ and $y\in Y_t$. Then it is easy to see that the identity map $\id_x$ is the initial object of the fiber $\widetilde{\mathcal{O}}(Y/T)_{x}$: for any morphism $g:\fromto{x}{y}$ such that $q(g)$ is a degenerate edge, the essentially unique morphism $\fromto{\id_x}{g}$ of $\widetilde{\mathcal{O}}(Y/T)_{x}$ is given by the diagram
\begin{equation*}
\begin{tikzpicture}[baseline]
\matrix(m)[matrix of math nodes, 
row sep=3ex, column sep=4ex, 
text height=1.5ex, text depth=0.25ex] 
{x&&x\\[-4ex]
&x&\\ 
x&&y\\}; 
\path[>=stealth,->,font=\scriptsize] 
(m-1-3) edge[-,double distance=1.5pt] (m-2-2)
edge node[right]{$g$} (m-3-3)
(m-1-1) edge[-,double distance=1.5pt] (m-2-2) 
edge[-,double distance=1.5pt] (m-3-1) 
(m-2-2) edge[inner sep=0.95pt] node[below left]{$g$} (m-3-3) 
(m-3-1) edge node[below]{$g$} (m-3-3); 
\end{tikzpicture}
\end{equation*}
Dually, the identity map $\id_y$ is the initial object of the fiber $\widetilde{\mathcal{O}}(Y/T)_{y}$: the essentially unique morphism $\fromto{\id_y}{g}$ of $\widetilde{\mathcal{O}}(Y/T)_{y}$ is given by the diagram
\begin{equation*}
\begin{tikzpicture}[baseline]
\matrix(m)[matrix of math nodes, 
row sep=3ex, column sep=4ex, 
text height=1.5ex, text depth=0.25ex] 
{y&&x\\[-4ex]
&y&\\ 
y&&y\\}; 
\path[>=stealth,->,font=\scriptsize] 
(m-1-3) edgenode[above]{$g$} (m-2-2)
edge node[right]{$g$} (m-3-3)
(m-1-1) edge[-,double distance=1.5pt] (m-2-2) 
edge[-,double distance=1.5pt] (m-3-1) 
(m-2-2) edge[-,double distance=1.5pt] (m-3-3) 
(m-3-1) edge[-,double distance=1.5pt] (m-3-3); 
\end{tikzpicture}
\end{equation*}
The result now follows immediately.
\end{proof}
\end{thm}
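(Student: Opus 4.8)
Looking at this theorem, I need to prove that the pulled-back left fibration $\widetilde{\mathcal{O}}(Y/T)_t \to ((Y^\vee)^{\op})_t \times Y_t$ is a perfect pairing in the sense of Proposition \ref{lem:twarrrecog}. Let me think through the structure.

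The fiber over $t$ consists of arrows $f: u \to v$ in $Y$ with $q(f)$ degenerate — i.e., arrows within $Y_t$. I need to verify three conditions: (1) each fiber over $x \in ((Y^\vee)^{\op})_t$ has an initial object, (2) each fiber over $y \in Y_t$ has an initial object, and (3) the initiality conditions agree. The natural candidates for initial objects are identity morphisms.

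The plan is to identify the initial objects explicitly as identities and verify the mapping-space condition using the concrete description of morphisms in $\widetilde{\mathcal{O}}(Y/T)$ from \ref{nul:OtildeoverT}.

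\begin{proof}
We must verify conditions (\ref{lem:twarrrecog}.1), (\ref{lem:twarrrecog}.2), and (\ref{lem:twarrrecog}.3). Fix an object $t \in T$. The fiber $\widetilde{\mathcal{O}}(Y/T)_t$ has as objects the morphisms $f : u \to v$ of $Y_t$, and as morphisms $\fromto{f}{g}$ those diagrams from the description in \ref{nul:OtildeoverT} that lie entirely in $Y_t$; in particular, the $q$-cocartesian morphism $\phi$ is forced to be an equivalence, so such a diagram amounts to a pushout-free commutative square in $Y_t$.

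First I verify (\ref{lem:twarrrecog}.1). Fix $x \in ((Y^\vee)^{\op})_t \cong Y_t^{\op}$; its fiber $\widetilde{\mathcal{O}}(Y/T)_x$ has as objects the arrows $g : \fromto{x}{y}$ of $Y_t$ with varying target. I claim $\id_x$ is initial. Given any such $g$, the displayed diagram exhibits a morphism $\fromto{\id_x}{g}$ in this fiber, and I must check it is essentially unique — equivalently, that the mapping space $\Map_{\widetilde{\mathcal{O}}(Y/T)_x}(\id_x, g)$ is contractible. Unwinding the description in \ref{nul:OtildeoverT}, a morphism $\fromto{\id_x}{g}$ over $\id_x$ on the source side is the data of the commutative square with $\phi$ an equivalence from $x$, hence contractible data, together with a filling of the lower triangle, which is contractible once the square is fixed. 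Dually, (\ref{lem:twarrrecog}.2) follows: for $y \in Y_t$, the fiber $\widetilde{\mathcal{O}}(Y/T)_y$ has objects $g : \fromto{x}{y}$ with varying source, and $\id_y$ is initial by the second displayed diagram, the uniqueness argument being symmetric.

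Finally, condition (\ref{lem:twarrrecog}.3): an object $f : \fromto{u}{v}$ of $\widetilde{\mathcal{O}}(Y/T)_t$ is initial in the fiber $\widetilde{\mathcal{O}}(Y/T)_{M_1(f)}$ over its source $u \in ((Y^\vee)^{\op})_t$ exactly when $f$ is an equivalence (being then equivalent to $\id_u$), and likewise it is initial in the fiber $\widetilde{\mathcal{O}}(Y/T)_{M_2(f)}$ over its target $v$ exactly when $f$ is an equivalence. Since these two conditions coincide, (\ref{lem:twarrrecog}.3) holds. The result now follows immediately from Pr. \ref{lem:twarrrecog}.
\end{proof}

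The main obstacle I anticipate is making rigorous the "essentially unique" claims — i.e., verifying that the relevant mapping spaces in the fibers are contractible rather than merely pointed. This requires care with the precise simplicial-set model of $\widetilde{\mathcal{O}}(Y/T)$ (deferred to Constr. \ref{constr:buildotilde}), though the authors signal via "it is easy to see" that once the concrete description is in hand, the contractibility is a routine consequence of the fact that the data of a $q$-cocartesian lift and a triangle-filling are each contractible.
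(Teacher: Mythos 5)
Your proposal is correct and takes essentially the same approach as the paper: identify $\id_x$ and $\id_y$ as initial objects of the source and target fibers via the explicit diagrams. The paper compresses condition (\ref{lem:twarrrecog}.3) into ``the result now follows immediately''; your elaboration that $f\colon u\to v$ is initial in either fiber precisely when $f$ is an equivalence (since the fibers are the slices $(Y_t)_{u/}$ and $(Y_t)_{/v}$) correctly supplies the suppressed step.
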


In light of Pr. \ref{lem:twarrrecog}, we deduce an identification
\begin{equation*}
((Y^{\vee})^{\op})_t\simeq Y_t^{\op}
\end{equation*}
that is functorial in $t$, as desired.

\begin{cnstr}\label{constr:buildotilde} We now set about giving a precise construction of the $\infty$-category $\widetilde{\mathcal{O}}(Y/T)$ and the left fibration $M$ described in \ref{nul:OtildeoverT}. We use very heavily the technology of effective Burnside $\infty$-categories from \cite{mack1}.

We begin by identifying two subcategories of the arrow $\infty$-category $\mathcal{O}(Y)$, each of which contains all the objects. Suppose $f:\fromto{u}{v}$ and $g:\fromto{x}{y}$ morphisms of $Y$. A morphism $\eta:\fromto{f}{g}$ of $\mathcal{O}(Y)$ given by a square
\begin{equation*}
\begin{tikzpicture} 
\matrix(m)[matrix of math nodes, 
row sep=4ex, column sep=4ex, 
text height=1.5ex, text depth=0.25ex] 
{u&v\\ 
x&y\\}; 
\path[>=stealth,->,font=\scriptsize] 
(m-1-1) edge node[above]{$f$} (m-1-2) 
edge node[left]{$s(\eta)$} (m-2-1) 
(m-1-2) edge node[right]{$t(\eta)$} (m-2-2) 
(m-2-1) edge node[below]{$g$} (m-2-2); 
\end{tikzpicture}
\end{equation*}
lies in $\mathcal{O}(Y)_{\dag}$ just in case $q(s(\eta))$ is an equivalence of $T$ and $t(\eta)$ is an equivalence of $Y$; the morphism $\eta$ lies in $\mathcal{O}(Y)^{\dag}$ just in case $s(\eta)$ is $q$-cocartesian.

Now form the effective Burnside $\infty$-categories
\begin{eqnarray*}
\widehat{\widetilde{\mathcal{O}}'(Y)}&\coloneq&A^{\eff}(\mathcal{O}(Y)^{\op},(\mathcal{O}(Y)_{\dag})^{\op},(\mathcal{O}(Y)^{\dag})^{\op}),\\
\widehat{\mathcal{O}(T)}&\coloneq&A^{\eff}(\mathcal{O}(T)^{\op},\iota\mathcal{O}(T)^{\op},\mathcal{O}(T)^{\op}),\\
\widehat{(Y^{\vee})^{\op}}&\coloneq&A^{\eff}(Y^{\op},Y^{\op}\times_{T^{\op}}\iota T^{\op},(\iota_TY)^{\op}),\\
\widehat{Y}&\coloneq&A^{\eff}(Y^{\op},\iota Y^{\op},Y^{\op}),\\
\widehat{T}&\coloneq&A^{\eff}(T^{\op},\iota T^{\op},T^{\op}).
\end{eqnarray*}
The objects of $\widehat{\widetilde{\mathcal{O}}'(Y)}$ are thus morphisms $f:\fromto{u}{v}$ of $Y$, and a morphism $\fromto{f}{g}$ from an arrow $f:\fromto{u}{v}$ to an arrow $g:\fromto{x}{y}$ is a commutative diagram
\begin{equation*}
\begin{tikzpicture}[baseline]
\matrix(m)[matrix of math nodes, 
row sep=4ex, column sep=4ex, 
text height=1.5ex, text depth=0.25ex] 
{u&u'&x\\
v&y'&y\\}; 
\path[>=stealth,->,font=\scriptsize] 
(m-1-1) edge node[above]{$\phi$} (m-1-2)
edge node[left]{$f$} (m-2-1)
(m-1-2) edge (m-2-2)
(m-2-1) edge node[below]{$\xi$} (m-2-2)
(m-1-3) edge node[above]{$\psi$} (m-1-2)
edge node[right]{$g$} (m-2-3)
(m-2-3) edge node[below]{$\eta$} (m-2-2); 
\end{tikzpicture}
\end{equation*}
in which: $\phi$ is $q$-cocartesian, $q(\psi)$ is an equivalence, and $\eta$ is an equivalence.

The source and target functors $\fromto{\mathcal{O}(Y)^{\op}}{Y^{\op}}$ along with the cocartesian fibration $q$ together induce a diagram of functors
\begin{equation*}
\begin{tikzpicture} 
\matrix(m)[matrix of math nodes, 
row sep=5ex, column sep=4ex, 
text height=1.5ex, text depth=0.25ex] 
{\widehat{\widetilde{\mathcal{O}}'(Y)}&\widehat{\mathcal{O}(T)}\\ 
\widehat{(Y^{\vee})^{\op}}\times\widehat{Y}&\widehat{T}\times\widehat{T}\\}; 
\path[>=stealth,->,font=\scriptsize] 
(m-1-1) edge (m-1-2) 
edge (m-2-1) 
(m-1-2) edge (m-2-2) 
(m-2-1) edge (m-2-2); 
\end{tikzpicture}
\end{equation*}
Observe that the omnibus theorem of the first author \cite[Th. 12.2]{mack1} implies that all of the functors that appear in this quadrilateral are inner fibrations.

Furthermore, since the formation of the effective Burnside $\infty$-category respects fiber products, one may employ \cite[Th. 12.2]{mack1} to show not only that the natural map 
\begin{equation*}
\widehat{M}':\fromto{\widehat{\widetilde{\mathcal{O}}'(Y)}}{\left(\widehat{(Y^{\vee})^{\op}}\times\widehat{Y}\right)\underset{\widehat{T}\times\widehat{T}}{\times}\widehat{\mathcal{O}(T)}}
\end{equation*}
is an inner fibration, but also that every morphism of $\widehat{\widetilde{\mathcal{O}}'(Y)}$ is $\widehat{M}'$-cocartesian. It is clear that $\widehat{M}'$ admits the right lifting property with respect to the inclusion $\into{\Delta^{\{0\}}}{\Delta^1}$, one deduces that $\widehat{M}'$ is a left fibration.

As we see, the $\infty$-category $\widehat{\widetilde{\mathcal{O}}'(Y)}$ is much too large, but we now proceed to cut both it and the left fibration $\widehat{M}'$ down to size via pullbacks:
\begin{enumerate}[(\ref{constr:buildotilde}.1)]
\item The first pullback in effect requires all equivalences in the description of the morphisms of $\widehat{\widetilde{\mathcal{O}}'(Y)}$ above to be identities. We pull back $\widehat{M}'$ along the inclusion
\begin{equation*}
\into{\left((Y^{\vee})^{\op}\times Y\right)\underset{T\times T}{\times}\mathcal{O}(T)}{\left(\widehat{(Y^{\vee})^{\op}}\times\widehat{Y}\right)\underset{\widehat{T}\times\widehat{T}}{\times}\widehat{\mathcal{O}(T)}}
\end{equation*}
(which is of course an equivalence) to obtain a left fibration
\begin{equation*}
M':\fromto{\widetilde{\mathcal{O}}'(Y)}{\left((Y^{\vee})^{\op}\times Y\right)\underset{T\times T}{\times}\mathcal{O}(T)}.
\end{equation*}
\item Second, we pull back the composite
\begin{equation*}
\widetilde{\mathcal{O}}'(Y)\ \tikz[baseline]\draw[>=stealth,->,font=\scriptsize](0,0.5ex)--node[above]{$M'$}(0.5,0.5ex);\ \left((Y^{\vee})^{\op}\times Y\right)\underset{T\times T}{\times}\mathcal{O}(T)\to\mathcal{O}(T)
\end{equation*}
along the inclusion $\into{T}{\mathcal{O}(T)}$ of the degenerate arrows to obtain the desired left fibration
\begin{equation*}
M:\fromto{\widetilde{\mathcal{O}}(Y/T)}{(Y^{\vee})^{\op}\times_T Y}
\end{equation*}
\end{enumerate}

It is now plain to see that $\widetilde{\mathcal{O}}(Y/T)$ is the $\infty$-category described in \ref{nul:OtildeoverT}, and $M$ is the left fibration described there.
\end{cnstr}


\appendix

\section{Cartesian and cocartesian fibrations}\label{sect:recall}

\begin{dfn}\label{rec:cocart} Suppose $p\colon\fromto{X}{S}$ an inner fibration of simplicial sets. Recall \cite[Rk. 2.4.1.4]{HTT} that an edge $f\colon\fromto{\Delta^{1}}{X}$ is \textbf{\emph{$p$-cartesian}} just in case, for each integer $n\geq 2$, any extension
\begin{equation*}
\begin{tikzpicture} 
\matrix(m)[matrix of math nodes, 
row sep=4ex, column sep=4ex, 
text height=1.5ex, text depth=0.25ex] 
{\Delta^{\{n-1,n\}}&X,\\ 
\Lambda^n_n&\\}; 
\path[>=stealth,->,font=\scriptsize] 
(m-1-1) edge node[above]{$f$} (m-1-2) 
edge[right hook->] (m-2-1) 
(m-2-1) edge node[below]{$F$} (m-1-2); 
\end{tikzpicture}
\end{equation*}
and any solid arrow commutative diagram
\begin{equation*}
\begin{tikzpicture} 
\matrix(m)[matrix of math nodes, 
row sep=4ex, column sep=4ex, 
text height=1.5ex, text depth=0.25ex] 
{\Lambda^n_n&X\\ 
\Delta^n&S,\\}; 
\path[>=stealth,->,font=\scriptsize] 
(m-1-1) edge node[above]{$F$} (m-1-2) 
edge[right hook->] (m-2-1) 
(m-1-2) edge node[right]{$p$} (m-2-2) 
(m-2-1) edge (m-2-2)
(m-2-1) edge[dotted] node[below]{$\overline{F}$} (m-1-2);
\end{tikzpicture}
\end{equation*}
the dotted arrow $\overline{F}$ exists, rendering the diagram commutative.

We say that $p$ is a \textbf{\emph{cartesian fibration}} \cite[Df. 2.4.2.1]{HTT} if, for any edge $\eta\colon\fromto{s}{t}$ of $S$ and for every vertex $x\in X_0$ such that $p(x)=s$, there exists a $p$-cartesian edge $f\colon\fromto{x}{y}$ such that $\eta=p(f)$.

\textbf{\emph{Cocartesian edges}} and \textbf{\emph{cocartesian fibrations}} are defined dually, so that an edge of $X$ is $p$-cocartesian just in case the corresponding edge of $X^{\op}$ is $p^{\op}$-cartesian, and $p$ is a cocartesian fibration just in case $p^{\op}$ is a cartesian fibration.
\end{dfn}

\begin{exm}\label{exm:nerveofGrothfib} A functor $p\colon\fromto{D}{C}$ between ordinary categories is a Grothendieck fibration if and only if the induced functor $N(p)\colon\fromto{ND}{NC}$ on nerves is a cartesian fibration \cite[Rk 2.4.2.2]{HTT}.
\end{exm}

\begin{exm}\label{exm:scocartfib} For any $\infty$-category $C$, write $\mathcal{O}(C)\coloneq\Fun(\Delta^1,C)$. By \cite[Cor. 2.4.7.12]{HTT}, evaluation at $0$ defines a cartesian fibration $s\colon\fromto{\mathcal{O}(C)}{C}$, and evaluation at $1$ defines a cocartesian fibration $t\colon\fromto{\mathcal{O}(C)}{C}$.

One can ask whether the functor $s\colon\fromto{\mathcal{O}(C)}{C}$ is also a \emph{cocartesian} fibration. One may observe \cite[Lm. 6.1.1.1]{HTT} that an edge $\fromto{\Delta^1}{\mathcal{O}(C)}$ is $s$-cocartesian just in case the corresponding diagram
\begin{equation*}
\fromto{(\Lambda^2_0)^{\rhd}\cong\Delta^1\times\Delta^1}{C}
\end{equation*}
is a pushout square.
\end{exm}

\begin{nul}\label{rec:straighten} Suppose $S$ a simplicial set. Then the collection of cartesian fibrations to $S$ with small fibers is naturally organized into an $\infty$-category $\Cat_{\infty/S}^{\cart}$. To define it, let $\Cat_{\infty}^{\cart}$ be the following subcategory of $\mathcal{O}(\Cat_{\infty})$: an object $\fromto{X}{U}$ of $\mathcal{O}(\Cat_{\infty})$ lies in $\Cat_{\infty}^{\cart}$ if and only if it is a cartesian fibration, and a morphism $\fromto{p}{q}$ in $\mathcal{O}(\Cat_{\infty})$ between cocartesian fibrations represented as a square
\begin{equation*}
\begin{tikzpicture} 
\matrix(m)[matrix of math nodes, 
row sep=4ex, column sep=4ex, 
text height=1.5ex, text depth=0.25ex] 
{X&Y\\ 
U&V\\}; 
\path[>=stealth,->,font=\scriptsize] 
(m-1-1) edge node[above]{$f$} (m-1-2) 
edge node[left]{$p$} (m-2-1) 
(m-1-2) edge node[right]{$q$} (m-2-2) 
(m-2-1) edge (m-2-2); 
\end{tikzpicture}
\end{equation*}
lies in in $\Cat_{\infty}^{\cart}$ if and only if $f$ carries $p$-cartesian edges to $q$-cartesian edges. We now define $\Cat_{\infty/S}^{\cocart}$ as the fiber over $S$ of the target functor
\begin{equation*}
t\colon\Cat_{\infty}^{\cart}\subset\mathcal{O}(\Cat_{\infty})\ {\tikz[baseline]\draw[>=stealth,->](0,0.5ex)--(0.5,0.5ex);}\ \Cat_{\infty}.
\end{equation*}
Equivalently \cite[Pr. 3.1.3.7]{HTT}, one may describe $\Cat_{\infty/S}^{\cart}$ as the simplicial nerve of the (fibrant) simplicial category of marked simplicial sets \cite[Df. 3.1.0.1]{HTT} over $S$ that are fibrant for the \emph{cartesian model structure} --- i.e., of the form $\fromto{X^{\natural}}{S}$ for $\fromto{X}{S}$ a cartesian fibration \cite[Df. 3.1.1.8]{HTT}.

The straightening/unstraightening Quillen equivalence of \cite[Th. 3.2.0.1]{HTT} now yields an equivalence of $\infty$-categories
\begin{equation*}
\Cat_{\infty/S}^{\cart}\simeq\Fun(S^{\op},\Cat_{\infty}).
\end{equation*}
So we obtain a dictionary between cartesian fibrations $p\colon\fromto{X}{S}$ with small fibers and functors $\XX\colon\fromto{S^{\op}}{\Cat_{\infty}}$. For any vertex $s\in S_0$, the value $\XX(s)$ is equivalent to the fiber $X_s$, and for any edge $\eta\colon\fromto{s}{t}$, the functor $\fromto{\XX(t)}{\XX(s)}$ assigns to any object $y\in X_t$ an object $x\in X_s$ with the property that there is a cocartesian edge $\fromto{x}{y}$ that covers $\eta$. We say that $\XX$ \textbf{\emph{classifies}} $p$ \cite[Df. 3.3.2.2]{HTT}.

Dually, the collection of cocartesian fibrations to $S$ with small fibers is naturally organized into an $\infty$-category $\Cat_{\infty/S}^{\cocart}$, and the straightening/unstraightening Quillen equivalence yields an equivalence of $\infty$-categories
\begin{equation*}
\Cat_{\infty/S}^{\cocart}\simeq\Fun(S,\Cat_{\infty}).
\end{equation*}
\end{nul}

\begin{ntn}\label{rec:leftfib} A cartesian (respectively, cocartesian) fibration with the property that each fiber is a Kan complex --- or equivalently, with the property that the functor that classifies it factors through the full subcategory $\Top\subset\Cat_{\infty}$ of Kan complexes --- is called a \textbf{\emph{right}} (resp., \emph{\textbf{left}}) \textbf{\emph{fibration}}. These are more efficiently described as maps that satisfy the right lifting property with respect to horn inclusions $\into{\Lambda^n_k}{\Delta^n}$ such that $1\leq k\leq n$ (resp., $0\leq k\leq n-1$) \cite[Pr. 2.4.2.4]{HTT}.

For any cartesian (resp., cocartesian) fibration $p\colon\fromto{X}{S}$, one may consider the smallest simplicial subset $\iota^SX\subset X$ that contains the $p$-cartesian (resp., $p$-cocartesian) edges. The restriction $\iota^S(p)\colon\fromto{\iota^SX}{S}$ of $p$ to $\iota^SX$ is a right (resp., left) fibration. The functor $\fromto{S^{\op}}{\Top}$ (resp., $\fromto{S}{\Top}$) that classifies $\iota^Sp$ is then the functor given by the composition $\iota\circ\XX$, where $\XX$ is the functor that classifies $p$, and $\iota$ is the functor $\fromto{\Cat_{\infty}}{\Top}$ that extracts the maximal Kan complex contained in an $\infty$-category.
\end{ntn}


\bibliographystyle{amsplain}
\bibliography{cocart}

\end{document}